\newtheorem{remark}[theorem]{{\it Remark\/}}
\renewcommand{\thefootnote}{\fnsymbol{footnote}}
\title{Finite Propagation Speed of Waves in Anisotropic Viscoelastic Media
}
\author{Joyce R. McLaughlin\thanks{Department of Mathematical Sciences, Rensselaer Polytechnic Institute, Troy, NY 12180 (mclauj@rpi.edu). The first author was partially supported by ONR grant N00014-13-1-0388.}
\and Jeong-Rock Yoon\thanks{Department of Mathematical Sciences, Clemson University, Clemson, SC 29634-0975 (jryoon@\break clemson.edu).}}
\begin{document}

\maketitle

\setcounter{page}{1}

\renewcommand{\thefootnote}{\arabic{footnote}}

\begin{abstract}
Finite propagation speed properties in mathematical elastic and viscoelastic models are fundamental in many applications where the data exhibits propagating fronts.  We note particularly that this property is observed in biomechanical imaging of tissue, in particular in the supersonic imaging experiment, and also in geophysics and ocean acoustics.  With these applications in mind, noting that there are many other applications as well, we present finite propagation speed results for very general integro-differential, anisotropic, viscoelastic linear models, which are not necessarily of convolution type. We start with work density, define work density decomposition and we achieve our results utilizing energy arguments.  One of the advantages of our presented method, instead of using plane wave arguments, is that there is no need to make the homogeneous medium assumption to obtain the finite propagation speed results.
\end{abstract}

\begin{keywords}
finite propagation speed, anisotropic viscoelastic media, biomedical imaging, integro-differential equation.
\end{keywords}

\begin{AMS}
74D05, 74J05, 74J25, 45K05, 45Q05, 92C55
\end{AMS}


\pagestyle{myheadings}
\thispagestyle{plain}
\markboth{J. R. MCLAUGHLIN AND J.-R. YOON}{FINITE PROPAGATION SPEED IN VISCOELASTIC MEDIA}

\section{Introduction}
Finite propagation speed is a fundamental property in many applications pertaining to elastic media as that is a phenomena observed in the data.  In addition, viscoelasticity and anisotropy are observed and so the mathematical models need to include those properties as well.  We note particularly in biomechanical imaging of tissue, e.g., in the experiment, supersonic imaging \cite{Bercoff}, a sequence of pushes created by focused ultrasound creates a wave with a wave front, indicating finite propagation speed, with time traces exhibiting viscoelastic behavior and where the experiment can be performed in isotropic tissue (breast or prostate) or anisotropic tissue (muscle). Furthermore, the pushes themselves can be created precisely because of the viscoelastic behavior in tissue \cite{S}.    Anisotropy, viscoelasticity and finite propagation speed are also observed in geophysics and in shallow water seabeds.  In this paper we consider linear but, at the same time, very general viscoelastic, anisotropic, inhomogeneous systems, and present a time domain energy method that can be utilized to establish the finite propagation speed property. Our model is an integro-differential equation system as it is this class of models (usually referred to as viscoelastic models) that we have found, in our work, are dissipative, exhibit the finite propagation speed property and provide properties that are consistent with applications of interest to us.  The fundamental expression that we use as our starting point for establishing finite propagation speed is related to a work density decomposition.

In the homogeneous case, a frequently used method is to establish finite propagation speed by taking the time Fourier/Laplace transform, utilizing plane waves, and establishing that at each frequency, the frequency dependent wave speeds observed in the plane wave exponent, are uniformly bounded. See, e.g., \cite{SH}.

Here we move away from the frequency domain approach and develop a space-time domain energy based method that naturally allows the assumption of an inhomogeneous medium.  We establish that: (1) if the solution to our model equations, together with the solution's time derivative, are zero at the base of a certain space-time cone, then the solution is identically zero within that cone thereby establishing finite propagation speed; (2) a finite sum superposition principle where the finite propagation speed property holds for individually defined stress/strain relations implies the finite propagation speed property holds for the sum of those individual stress/strain relations; and (3) a continuous superposition principle, utilizing an infinite integral, that enables us to apply our energy method to the case where the stress/strain relation is derived from the fractional derivative Zener, see \cite{M}, model; this last result applies also to the fractional derivative Maxwell model but not to the fractional derivative Voigt and Newton models where the finite propagation speed property does not hold.

Prior to this work, we developed an energy based argument that establishes the finite propagation speed property for:  (1) the inhomogeneous, isotropic linear acoustic equation, \cite{MY}; (2) the inhomogeneous linear elastic, isotropic system \cite{MY}; and (3) the inhomogeneous, isotropic, viscoelastic generalized  Linear Solid Model \cite{MTY} where we also established smoothness results for the solutions of this model.

In this paper we advance this argument to our much more general viscoelastic anisotropic model starting with a work density formulation and establishing an inequality related to the dissipative property of the system that, when satisfied, establishes finite propagation speed for our general model. We present examples that include, e.g.: (1) an inhomogeneous, isotropic aging model; (2) the generalized Linear Solid Model where we show finite propagation speed using our superposition result; and (3) the fractional derivative Zener or Maxwell models where we utilize our continuous superposition principle to establish finite propagation speed.

The paper is organized as follows.  In Section 2 we present the background. In Section 3 we give our basic energy lemmas.  In Section 4 we present our main finite propagation result.  In Section 5 we advance our finite propagation speed result to a finite sum of anisotropic, viscoelastic models which individually satisfy our criteria and also advance our method to apply to a continuum of anisotropic, viscoelastic models.  In Section 6 we present examples where our theory is applied.

\section{Background}\label{sec:Background}
The goal of this paper is to present a method to establish finite propagation speed for a {\it{general}} linear viscoelastic anisotropic mathematical model. The method is based on defining a work density decomposition which contains the sum of the kinetic energy and a candidate for strain energy and an associated dissipative energy term. We establish that if the initial velocity and initial displacement are zero in a ball in $\mathbb{R}^d$, $d\ge 2$, then the sum of the kinetic energy and the strain energy is zero in a space-time cone implying that the displacement is identically equal to zero in the same cone.  Note that in the purely elastic case the strain energy is inarguably well defined but in the viscoelastic case it is not.

In this section we present a finite propagation speed result, similar to that established in \cite{MTY} for an isotropic generalized Linear Solid Model, where here the isotropic generalized Linear Solid Model with exponential convolution type kernels is slightly further generalized.  Also here we say explicitly the choice for strain energy density that enables the finite propagation speed result.  Furthermore, in keeping with the rest of the paper we assume that we have enough smoothness for all derivatives and integrals to exist. For a discussion of possible choices for the solution and parameter spaces, see \cite {MTY}.

Our generalized Linear Solid Model is
\begin{equation}\label{eq:VI}
\begin{aligned}
\rho(x)\vec{u}_{tt}=&\nabla\left[\lambda^0(x) \nabla\cdot \vec{u} +\sum^{M}_{i=1}\int_0^t\lambda^i(x) e^{-\frac{t-s}{\gamma^i (x)}}\nabla\cdot\vec{u}_{s}(x,s)\,ds\right]\\
 &+2\nabla\cdot \left[\mu^0(x) \epsilon(x,t)+\sum^{N}_{i=1} \int_0^t\mu^{i}(x) e^{-\frac{t-s}{\tau^i(x)}}\epsilon_s(x,s)\,ds\right],
\end{aligned}
\end{equation}
where the mass density $\rho(x)$, the Lam$\acute{\mbox{e}}$ parameter $\lambda^i(x)$, the shear modulus $\mu^i(x)$,
the relaxation times $\gamma^i(x)$ and $\tau^i(x)$ are all assumed to be positive. Here $\vec{u}$ is the displacement,
$\epsilon=\frac 1 2 (\nabla\vec u+(\nabla\vec u)^T)$ is the strain, and subscripts denote time derivatives.
The space-time cone is defined as follows: For any open ball
$B_R(x_0)\subset \mathbb{R}^d$ and any given $c>0$,
$$
\forall s\in (0,R/c),\quad \Lambda(s):=\bigcup_{0<\tau<s} C_\tau
\quad\mbox{where}\quad C_\tau:=B_{R-c \tau}(x_0)\times\{t=\tau\},
$$
where $B_{R-c\tau}(x_0)$ is the ball centered at $x_{0} \in \mathbb{R}^{d}$ of radius
$R-c\tau>0$. Also $\partial \Lambda(s)=C_s\cup C_0\cup L$ with $L=\bigcup_{0<\tau<s} \partial C_\tau$.
The kinetic energy density is $e_{K}(x,t):=\frac{\rho(x)}{2}|\vec u_t(x,t)|^2$, and the strain energy density is \emph{defined} as
$$
\begin{aligned}
e_{S}(x,t)=&\frac{\lambda^{0}(x)}{2}|\nabla\cdot\vec{u}(x,t)|^2+\frac{\lambda^i(x)} 2 \sum^{M}_{i=1}\left|\int^{t}_{0} e^{-\frac{t-s}{\gamma^i(x)}} \nabla\cdot\vec{u}_{s}(x,s)\,ds\right|^{2}\\
&+\mu^{0}(x)|\epsilon(x,t)|^{2}+\sum^{N}_{i=1} \mu^i(x) \left|\int^{t}_{0} e^{-\frac{t-s}{\tau^i(x)}}\epsilon(x,s)\,ds\right|^{2}
\end{aligned}
$$
yielding the total stored energy, $e(s)=\int_{C_s} \left\{e_{K} (x,s) + e_{S}(x,s)\right\}\,dx$.
Our choice of strain energy density is \emph{natural} given the definition of that in the purely elastic isotropic case and this choice enables us to establish the following:

\begin{theorem}
Let $\vec{u}$ satisfy equation \eqref{eq:VI} with $\vec{u}(x,0)=\vec{u}_t(x,0)=0$, $\forall x\in B_{R} (x_{0})$.
Then $\vec{u} \equiv 0$ in the space-time cone, $\Lambda (R/c)$, with
$$
c=\sup_{x\in \bar B_{R} (x_0)} \sqrt{\frac{\sum_{i=0}^M\lambda^i(x)+2\sum_{i=0}^{N}\mu^i(x)}{\rho(x)}}.
$$
\end{theorem}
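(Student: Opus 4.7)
The plan is to show by a space--time energy argument that $e(s)\equiv 0$ on $(0,R/c)$; since the integrand of $e(s)$ is a sum of squares, this forces $\vec{u}_t\equiv 0$ on every $C_s$, and combined with the zero initial displacement, $\vec{u}\equiv 0$ throughout $\Lambda(R/c)$. First I would introduce the memory variables $w^i(x,t):=\int_0^t e^{-(t-s)/\gamma^i(x)}\nabla\cdot\vec{u}_s(x,s)\,ds$ and $v^i(x,t):=\int_0^t e^{-(t-s)/\tau^i(x)}\epsilon_s(x,s)\,ds$, which are precisely the quantities squared in the definition of $e_S$. Each satisfies a first-order ODE in $t$, namely $w^i_t+w^i/\gamma^i=\nabla\cdot\vec{u}_t$ and $v^i_t+v^i/\tau^i=\epsilon_t$, with zero initial condition.

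Next, I would take the dot product of \eqref{eq:VI} with $\vec{u}_t$ and exploit these ODEs through the elementary manipulation $\lambda^iw^i(\nabla\cdot\vec{u}_t)=\lambda^iw^i(w^i_t+w^i/\gamma^i)=\bigl(\tfrac{\lambda^i}{2}|w^i|^2\bigr)_t+\tfrac{\lambda^i}{\gamma^i}|w^i|^2$, so that every memory term contributes exactly the time derivative of the corresponding piece of $e_S$ plus a non-negative dissipation. The $\mu^i v^i$ terms are handled identically. This yields the pointwise identity $(e_K+e_S)_t+D=\nabla\cdot\bigl[(pI+2q)\vec{u}_t\bigr]$, where $p:=\lambda^0\nabla\cdot\vec{u}+\sum_{i=1}^M\lambda^iw^i$, $q:=\mu^0\epsilon+\sum_{i=1}^N\mu^iv^i$, and $D:=\sum(\lambda^i/\gamma^i)|w^i|^2+\sum(2\mu^i/\tau^i)|v^i|^2\ge 0$. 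Integrating over $\Lambda(s)$ via the space--time divergence theorem, the $C_0$ boundary vanishes by the zero initial data, $C_s$ gives $e(s)$, and the lateral surface $L$ contributes a flux, with outward unit normal $(\hat{n}_x,c)/\sqrt{1+c^2}$ where $\hat{n}_x=(x-x_0)/|x-x_0|$.

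The main obstacle---and the place where the specific form of $c$ is used---is establishing the pointwise inequality $\sigma\hat{n}_x\cdot\vec{u}_t\le c(e_K+e_S)$, with $\sigma:=pI+2q$, so that the lateral flux is non-negative. I would write $\sigma\hat{n}_x\cdot\vec{u}_t=p\,\mathrm{tr}\,\tilde{\epsilon}+2q:\tilde{\epsilon}$ using the symmetric test tensor $\tilde{\epsilon}:=\tfrac12(\vec{u}_t\otimes\hat{n}_x+\hat{n}_x\otimes\vec{u}_t)$, then apply the weighted Cauchy--Schwarz inequality with weights $\lambda^i$ and $\mu^i$ respectively to obtain $|p\,\mathrm{tr}\,\tilde{\epsilon}|\le\sqrt{2(\sum_{i=0}^M\lambda^i)e_S^\lambda}\,|\mathrm{tr}\,\tilde{\epsilon}|$ and $|2q:\tilde{\epsilon}|\le 2\sqrt{(\sum_{i=0}^N\mu^i)e_S^\mu}\,|\tilde{\epsilon}|$, where $e_S^\lambda,e_S^\mu$ denote the two halves of $e_S$. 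A further weighted Cauchy--Schwarz in the two-term sum (with weights $e_S^\lambda,e_S^\mu$), combined with the elementary identities $(\mathrm{tr}\,\tilde{\epsilon})^2=(\vec{u}_t\cdot\hat{n}_x)^2$ and $|\tilde{\epsilon}|^2=\tfrac12(|\vec{u}_t|^2+(\vec{u}_t\cdot\hat{n}_x)^2)$, collapses everything to $|\sigma\hat{n}_x\cdot\vec{u}_t|\le 2c\sqrt{e_Ke_S}\le c(e_K+e_S)$, with the sharp threshold $c^2\rho(x)\ge\sum_{i=0}^M\lambda^i(x)+2\sum_{i=0}^N\mu^i(x)$ emerging exactly as the theorem's choice of $c$. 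Substituting back, the identity over $\Lambda(s)$ reads $e(s)+\int_{\Lambda(s)}D+\int_L(\text{non-negative flux})=0$; since all three summands are non-negative we conclude $e(s)=0$, and the proof is complete.
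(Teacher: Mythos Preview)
Your proposal is correct and follows essentially the same approach as the paper: multiply \eqref{eq:VI} by $\vec u_t$, apply the space--time divergence theorem over $\Lambda(s)$, identify non-negative dissipation terms, and verify that the lateral flux integrand $e_K+e_S-\tfrac1c(\sigma\vec u_t)\cdot\hat n_x$ is non-negative so that $0\le e(s)\le 0$. Your memory-variable/ODE formulation and the detailed weighted Cauchy--Schwarz bound simply make explicit the steps the paper summarizes as ``it can be shown'' with a citation to \cite{MTY}.
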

\begin{proof}
Multiplying equation (\ref{eq:VI}) by $\vec{u} _{t} $ integrating over $\Lambda(s), \;0<s<R/c$, and using the space-time divergence theorem we obtain, using also $e(0)=0$, that
$$
\begin{aligned}
e(s)\leq&\dfrac{-c}{\sqrt{1+c^{2}}}\; \int_{L} \left(e_K(x,t)+e_S(x,t)-F_1(x,t)\cdot \dfrac{x-x_{0}}{c|x-x_{0}|}\right)dS_{x,t}\\
&-\sum^{M}_{i=1} \int _{\Lambda (s)} \dfrac{\lambda^{i}(x)}{\gamma^i(x)} \left|
\int^{t}_{0}e^{-\frac{t-\tau}{\gamma^i(x)}}\nabla\cdot\vec{u}_{t}d\tau\right|^{2}dxdt\\
&-\sum^{N}_{i=1} \int _{\Lambda (s)} \frac{2 \mu^{i}(x)}{\tau^i(x)} \left| \int^{t}_{0}e^{-\frac{t-\tau}{\tau^i(x)}}
\epsilon_s(x,t)\,d\tau \right|^{2}dxdt,
\end{aligned}
$$
where
$$
\begin{aligned}
F_{1}(x,t):=&\left[\lambda^0(x) \nabla\cdot \vec{u} +\sum^{M}_{i=1}\int_0^t\lambda^i(x) e^{-\frac{t-s}{\gamma^i (x)}}\nabla\cdot\vec{u}_{s}(x,s)\,ds\right]\vec u_t \\
&+2\left[\mu^0(x) \epsilon(x,t)+\sum^{N}_{i=1} \int_0^t\mu^{i}(x) e^{-\frac{t-s}{\tau^i(x)}}\epsilon_s(x,s)\,ds\right]\vec{u}_t.
\end{aligned}
$$
Since it can be shown that
$e_K(x,t)+e_S(x,t)-F_1(x,t)\cdot \dfrac{x-x_{0}}{c|x-x_{0}|} \geq 0$, we get $0\le e(s)\le 0$,
implying $\vec{u}\equiv 0$ in $ \Lambda(R/c)$.  See \cite {MTY} for additional details in the above computations.
\end{proof}

Having given this background, we now present our main results in sections 3--5.

\section{Energy Lemmas}
We begin with a formal lemma that presents a work density decomposition in order to establish the
fundamental basis for our energy based approach. Throughout this paper we do not address smoothness considerations for
our functions $\sigma$, $e_K$, $e_S$, $e_D$, and $\vec u$. We assume all quantities exist and all differentiation and integrations
are achievable. Throughout sections 3-6 we develop results for $\mathbb{R}^3$; all the results can be
extended to $\mathbb{R}^d$, $d\ge 2$, using similar analysis. Often, we use the dots above letters to denote time derivatives.
\begin{lemma}\label{divergence}
Suppose $\vec u$ possesses the following work density decomposition:
\begin{equation}\label{work_density}
\nabla\cdot\left(\sigma(x,t) \vec u_t(x,t)\right)=\dot e_K(x,t)+\dot e_S(x,t)+\dot e_D(x,t)
\end{equation}
for any chosen $e_K$, $e_S$, and $e_D$.
For any open ball
$B_R(x_0)\subset \mathbb{R}^3$ and any given $c>0$, consider a space-time cone
$$
\forall s\in (0,R/c),\quad \Lambda(s):=\bigcup_{0<\tau<s} C_\tau
\quad\mbox{where}\quad C_\tau:=B_{R-c \tau}(x_0)\times\{t=\tau\},
$$
with $\partial \Lambda(s)=C_s\cup C_0\cup L$ with $L=\bigcup_{0<\tau<s} \partial C_\tau$.
Define the stored energy by
\begin{equation}\label{def_store}
e(s):=\int_{C_s} \left\{e_K(x,s)+e_S(x,s)\right\}\,dx,\quad \forall s\in [0,R/c).
\end{equation}
Then we have
\begin{equation}\label{ese0}
\begin{aligned}
e(s)-e(0)=&-\frac{c}{\sqrt{1+c^2}} \int_{L}
\left\{e_K(x,t)+e_S(x,t)-\frac{1}{c} (\sigma \vec u_t) \cdot \frac{x-x_0}{|x-x_0|}\right\}\,dS_{x,t}\\
&-\int_{\Lambda(s)} \dot e_D(x,t)\,dxdt.
\end{aligned}
\end{equation}
\end{lemma}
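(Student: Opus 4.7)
The plan is to recognize \eqref{work_density} as encoding the space-time divergence of a natural $3+1$ vector field, and then apply the divergence theorem on $\Lambda(s)$. Rewriting \eqref{work_density} as $\partial_t(e_K+e_S)-\nabla_x\cdot(\sigma\vec u_t)=-\dot e_D$, the left side is exactly the space-time divergence $\mathrm{div}_{x,t}V$ of
$$
V(x,t):=\bigl(-\sigma(x,t)\vec u_t(x,t),\; e_K(x,t)+e_S(x,t)\bigr)\in\mathbb{R}^{3+1},
$$
so integrating over $\Lambda(s)$ and applying the space-time divergence theorem yields $\int_{\partial\Lambda(s)} V\cdot\vec n\,dS = -\int_{\Lambda(s)}\dot e_D\,dx\,dt$.

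Next I would compute each of the three boundary contributions separately, using $\partial\Lambda(s)=C_s\cup C_0\cup L$. On the top disk $C_s=B_{R-cs}(x_0)\times\{t=s\}$ the outward unit normal is $(0,1)$, so the flux contribution is $\int_{C_s}(e_K+e_S)\,dx=e(s)$. On the bottom disk $C_0$ the outward unit normal is $(0,-1)$, giving $-e(0)$. On the lateral face $L$, which is the level set $\{|x-x_0|+c\tau=R\}$, the outward unit normal is $\frac{1}{\sqrt{1+c^2}}\bigl(\frac{x-x_0}{|x-x_0|},\,c\bigr)$; pairing $V$ against this normal and pulling out the factor $c/\sqrt{1+c^2}$ produces exactly the lateral integrand in \eqref{ese0}.

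Assembling the three pieces and solving for $e(s)-e(0)$ gives
$$
e(s)-e(0) \;=\; -\frac{c}{\sqrt{1+c^2}}\int_L\!\Bigl\{e_K+e_S-\tfrac{1}{c}(\sigma\vec u_t)\cdot\tfrac{x-x_0}{|x-x_0|}\Bigr\}\,dS_{x,t}\;-\;\int_{\Lambda(s)}\dot e_D\,dx\,dt,
$$
which is precisely \eqref{ese0}. The argument is essentially bookkeeping once the work density decomposition is recast as a space-time divergence. The only step that requires genuine attention is computing the outward unit normal on the slanted lateral face correctly so that the geometric factor $1/\sqrt{1+c^2}$ emerges, together with carefully tracking the sign convention on the bottom cap $C_0$. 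Since smoothness of all quantities is assumed by hypothesis, no regularity subtleties arise.
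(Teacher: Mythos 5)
Your proposal is correct and follows essentially the same route as the paper: both recast \eqref{work_density} as a space-time divergence, integrate over $\Lambda(s)$, apply the space-time divergence theorem with the outward normals $(0,1)$ on $C_s$, $(0,-1)$ on $C_0$, and $\frac{1}{\sqrt{1+c^2}}\left(\frac{x-x_0}{|x-x_0|},c\right)$ on $L$, and assemble the three boundary contributions to obtain \eqref{ese0}. Your normals, signs, and the extraction of the factor $c/\sqrt{1+c^2}$ all agree with the paper's computation.
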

\begin{proof}
Integrating
$\nabla\cdot\left(\sigma \vec u_t\right)=\dot e_K(x,t)+\dot e_S(x,t)+\dot e_D(x,t)$ over $\Lambda(s)$, we get
$$
\int_{\Lambda(s)} \left(\dot e_K(x,t)+\dot e_S(x,t)
-\nabla\cdot\left(\sigma \vec u_t\right)\right)\,dxdt
=-\int_{\Lambda(s)}\dot e_D(x,t)\,dxdt.
$$
Applying space-time divergence theorem to the left hand side, we have
$$
LHS= \int_{\partial \Lambda(s)}
\left\{ \left(e_K(x,t)+e_S(x,t)\right)\nu_t
-(\sigma \vec u_t) \cdot \nu_x\right\}\,dS_{x,t},
$$
where $dS_{x,t}$ is the space-time boundary element,
and $(\nu_x,\nu_t)$ is the space-time outward normal to $\partial \Lambda(s)$ that is given by
$$
(\nu_x,\nu_t)=
\begin{cases}
  (0,1)  & \mbox{on  $C_s$},\\
  (0,-1) &\mbox{on  $C_0$},\\
  \displaystyle\frac{1}{\sqrt{1+c^2}}\left(\frac{x-x_0}{|x-x_0|},c \right)
  &\mbox{on $\displaystyle L$}.
\end{cases}
$$
Thus we have
$$
LHS=e(s)-e(0)+\frac{c}{\sqrt{1+c^2}} \int_{L}
\left\{e_K(x,t)+e_S(x,t)
-\frac{1}{c}  (\sigma \vec u_t)\cdot \frac{x-x_0}{|x-x_0|}\right\}\,dS_{x,t}.
$$
\end{proof}

\begin{remark}
$e_K$, $e_S$, and $e_D$ will be called the kinetic, strain, dissipated energy densities, and $\sigma$ will be the stress, which must be a symmetric
matrix. Then the physical meaning of \eqref{work_density} is the first law of thermodynamics (work-energy conservation);
the work done to a volume through its surface equals the sum of the change in kinetic and strain energy plus total dissipated energy.
However, Lemma \ref{divergence} does not assume more properties of energy densities such as nonnegativity; the lemma simply claims that
whenever the work density decomposition obeying the first law of thermodynamics holds, we have a useful identity \eqref{ese0} for the stored energy
difference.
\end{remark}

Now we consider a very general form of anisotropic viscoelastic linear constitutive equation for the stress-strain relation:
$$
\sigma(x,t)=\int_0^t \mathcal{C}(x,t,s) \epsilon_s(x,s)\,ds,\quad\mbox{where the strain is given by }
\epsilon=\frac 1 2 \left(\nabla\vec u+\nabla\vec u^T\right)
$$
and we assume three properties of the stiffness $4$-tensor $\mathcal{C}=(C_{ijkl})_{i,j,k,l=1}^3$: For any $x\in \bar \Omega$
where $\Omega$ is an open domain in $\mathbb{R}^3$ and $0\le s\le t\le T$,
\begin{itemize}
\item $\mathcal{C}(x,t,s)$ is symmetric: $C_{ijkl}=C_{klij}=C_{jikl}$, which already accounts for the symmetry of $\sigma$ and $\epsilon$.
\item $\mathcal{C}(x,t,s)$ is positive semi-definite, denoted by $\mathcal{C}(x,t,s)\succeq 0$ which means $M:\mathcal{C}(x,t,s)M\ge 0$ for all $3\times 3$ symmetric matrices $M$. Here the colon means componentwise inner product between two matrices. In our case, this semi-definite assumption is sufficient, because we assume the existence
    of the solution $\vec u$. However, standard methods for establishing existence and uniqueness of solutions would usually require that $\mathcal{C}(x,t,s)$ is positive definite.
\item $\mathcal{C}(x,t,t)$ is positive definite, denoted by $\mathcal{C}(x,t,t)\succ 0$ which means
$M:\mathcal{C}(x,t,t)M>0$ for all $3\times 3$ nonzero symmetric matrices $M$, or equivalently
all the eigenvalues $\lambda^{\mathcal{C}}(x,t)$ of $\mathcal{C}(x,t,t)$ are strictly positive. Here by the eigenvalue we mean
$$
\mathcal{C}(x,t,t)M=\lambda^{\mathcal{C}}(x,t) M\quad\mbox{for some $3\times 3$ nonzero symmetric  eigenmatrix $M$}.
$$
\end{itemize}
For actual computation of eigenvalues, we may convert $\mathcal{C}$ into a $6\times 6$ symmetric matrix and $M$ into $6\times 1$ vector using
Kelvin notation, which is designed to preserve the norms, multiplying $\sqrt{2}$ appropriately at off-diagonal places:
$$
\begin{aligned}
\mathcal{C}=(C_{ijkl})_{i,j,k,l=1}^3 &~\leftrightarrow~
\left(
  \begin{array}{crlrrr}
    C_{1111} & C_{1122} & C_{1133} & \sqrt{2}C_{1123} & \sqrt{2}C_{1131} & \sqrt{2}C_{1112} \\
     & C_{2222} & C_{2233} & \sqrt{2}C_{2223} & \sqrt{2}C_{2231} & \sqrt{2}C_{2212} \\
     & & C_{3333} & \sqrt{2}C_{3323} & \sqrt{2}C_{3331} & \sqrt{2}C_{3312} \\
     &  &  & 2C_{2323} & 2C_{2331} & 2C_{2312} \\
     & \text{sym.} &  &  &2C_{3131} & 2C_{3112} \\
     & & & & & 2C_{1212} \\
  \end{array}
\right),\\
M=(M_{ij})_{i,j=1}^3 &~\leftrightarrow~
(M_{11},M_{22},M_{33},\sqrt{2}M_{23},\sqrt{2}M_{31},\sqrt{2}M_{12})^T\quad\mbox{for symmetric matrix $M$}.
\end{aligned}
$$
Since the norms are preserved, the eigenvalues of this converted matrix are those of $\mathcal{C}$.
Note that more traditional Voigt notation conversion does not preserve the eigenvalues. See \cite{Helbig} for more details.

\begin{remark}
The positive (semi) definite assumption is sufficient for the Drucker stability criterion to be satisfied.
The Drucker stability criterion, see \cite{Bower}, pp. 67--68, is that (virtual) work done by the incremental changes of the stress and displacements to a volume through its surface is always nonnegative. The nonnegativity of this (virtual) work is not
a part of thermodynamic laws, but the Drucker criterion is utilized in many practical applications,
especially to establish the existence and uniqueness of stable solutions.
However, for large deformation cases especially in a nonlinear regime,
the strain may be opposite to the stress in the sense that some eigenvalues of $\mathcal{C}$ may be negative. Then the Drucker criterion is violated.
In our case, our main interest is wave propagation with small amplitude, so it
is reasonable to make the positive (semi) definite assumption.
\end{remark}
Thus, throughout the paper, we assume
\begin{equation}\label{C-cond}
\mbox{$\mathcal{C}(x,t,s)\succeq 0$ is symmetric and $\mathcal{C}(x,t,t)\succ 0$, for all
$x\in \bar \Omega$ and $0\le s\le t\le T$}.
\end{equation}
Then the inverse of $\mathcal{C}(x,t,t)$ exists and $\mathcal{S}(x,t):=\mathcal{C}^{-1}(x,t,t)$ becomes symmetric and positive definite
using the same criteria as for $\mathcal{C}$. As in the case of matrices ($2$-tensors), the eigenvalues of $\mathcal{S}(x,t)$ are $1/\lambda^{\mathcal{C}}(x,t)$ and
the minimum eigenvalue of $\mathcal{S}$ is $1/\lambda_{max}^{\mathcal{C}}(x,t)$, where $\lambda_{max}^{\mathcal{C}}(x,t)$
is the maximum eigenvalue of $\mathcal{C}(x,t,t)$. Also, the Rayleigh quotient provides
$$
\sigma:\mathcal{S}(x,t)\sigma\ge \frac{|\sigma|^2}{\lambda^{\mathcal{C}}_{max}(x,t)}\quad
\mbox{ for all $3\times 3$ symmetric matrices $\sigma$, where $|\sigma|^2=\sigma:\sigma$}.
$$
\begin{remark}
In applications, many materials are modeled by a convolution-type integral where $\mathcal{C}(x,t,s)=\mathcal{C}(x,t-s)$, which is called
a hereditary kernel. Our model is more general and it can also cover aging materials, see section \ref{sec:aging}.
\end{remark}
Now we consider the equation of motion, where we assume that the time independent mass density $\rho(x)\ge \rho_0$ for some $\rho_0>0$.

\begin{lemma}\label{lemma1}
Let $\vec u$ be a solution of
$$
\rho(x) \vec u_{tt} (x,t)=\nabla\cdot\sigma(x,t)
\quad\mbox{where}\quad\sigma(x,t)=\int_0^t \mathcal{C}(x,t,s) \epsilon_s(x,s)\,ds.
$$
Define the kinetic energy density, the strain energy density and the dissipated energy density by
\begin{equation}\label{def_energy}
\begin{aligned}
e_K(x,t)&=\frac{\rho(x)}{2}|\vec u_t(x,t)|^2,\qquad e_S(x,t)=\frac 1 2 \sigma(x,t): \mathcal{S}(x,t) \sigma(x,t),\\
\dot{e}_D(x,t)&=-\left[\sigma(x,t): \mathcal{S}(x,t) \tilde\sigma(x,t)+\frac 1 2 \sigma(x,t): \mathcal{S}_t(x,t) \sigma(x,t)\right],
\end{aligned}
\end{equation}
where $\displaystyle\tilde\sigma(x,t):=\int_0^t \mathcal{C}_t(x,t,s) \epsilon_s(x,s)\,ds$.
Then we have the work density decomposition given in \eqref{work_density}:
\begin{equation}\label{decomp_energy}
\nabla\cdot\left(\sigma \vec u_t\right)=\dot{e}_K+\sigma : \dot\epsilon=\dot{e}_K+\dot{e}_S+\dot{e}_D.
\end{equation}
\end{lemma}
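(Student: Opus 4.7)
The plan is a direct calculation in two halves. First I establish the left equality $\nabla\cdot(\sigma\vec u_t)=\dot e_K+\sigma:\dot\epsilon$, and then the right equality $\sigma:\dot\epsilon=\dot e_S+\dot e_D$. Nothing here requires positive semi-definiteness of $\mathcal C$; only the symmetries of $\sigma$, $\mathcal C$, and $\mathcal S$, together with the fact that $\mathcal S(x,t)=\mathcal C(x,t,t)^{-1}$, are used.

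For the first equality I apply the product rule componentwise: $\nabla\cdot(\sigma\vec u_t)=(\nabla\cdot\sigma)\cdot\vec u_t+\sigma:\nabla\vec u_t$. Using the equation of motion $\nabla\cdot\sigma=\rho\vec u_{tt}$, the first term is $\rho\vec u_{tt}\cdot\vec u_t=\tfrac{d}{dt}(\tfrac{\rho}{2}|\vec u_t|^2)=\dot e_K$. For the second term, the symmetry of $\sigma$ lets me symmetrize $\nabla\vec u_t$, giving $\sigma:\nabla\vec u_t=\sigma:\tfrac12(\nabla\vec u_t+\nabla\vec u_t^{\,T})=\sigma:\dot\epsilon$.

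For the second equality I differentiate $e_S=\tfrac12\sigma:\mathcal S\sigma$. Using the symmetry of the 4-tensor $\mathcal S$ (which is inherited from that of $\mathcal C$), the cross terms combine, yielding $\dot e_S=\sigma_t:\mathcal S\sigma+\tfrac12\sigma:\mathcal S_t\sigma$. Adding the defining expression for $\dot e_D$, the $\tfrac12\sigma:\mathcal S_t\sigma$ terms cancel exactly and I obtain $\dot e_S+\dot e_D=\sigma_t:\mathcal S\sigma-\sigma:\mathcal S\tilde\sigma=\mathcal S\sigma:(\sigma_t-\tilde\sigma)$, where in the last step I have used the symmetry of $\mathcal S$ in the sense that $A:\mathcal S B=B:\mathcal S A$ for any symmetric matrices $A,B$. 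Next I differentiate $\sigma(x,t)=\int_0^t\mathcal C(x,t,s)\epsilon_s(x,s)\,ds$ by Leibniz's rule; the boundary contribution at $s=t$ is $\mathcal C(x,t,t)\dot\epsilon(x,t)$, and the interior differentiation reproduces exactly $\tilde\sigma$. Hence $\sigma_t-\tilde\sigma=\mathcal C(x,t,t)\dot\epsilon$. Substituting this in and using $\mathcal S(x,t)=\mathcal C(x,t,t)^{-1}$ together with the symmetry of $\mathcal C$ (which gives $\mathcal S\sigma:\mathcal C\dot\epsilon=\mathcal C\mathcal S\sigma:\dot\epsilon=\sigma:\dot\epsilon$) delivers $\dot e_S+\dot e_D=\sigma:\dot\epsilon$, completing the decomposition.

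The only mildly delicate point is bookkeeping with 4-tensor symmetries: one must verify that $A:\mathcal T B=B:\mathcal T A$ for $A,B$ symmetric whenever $\mathcal T$ satisfies $T_{ijkl}=T_{klij}$, and that the composition $\mathcal C\mathcal S$ acting on symmetric matrices is the identity. Both follow immediately from the assumed symmetries and the definition of $\mathcal S$ as the pointwise (in $x,t$) tensor inverse acting on symmetric matrices, so no real obstacle arises; the lemma is essentially an application of the chain rule together with Leibniz differentiation of the Volterra integral defining $\sigma$.
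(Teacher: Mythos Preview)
Your argument is correct and follows essentially the same route as the paper's own proof: both establish the first equality via the product rule plus the symmetry of $\sigma$, and both obtain the second by differentiating $e_S$ and using the Leibniz identity $\dot\sigma=\mathcal{C}(x,t,t)\dot\epsilon+\tilde\sigma$ together with the symmetry of $\mathcal{S}$. The only cosmetic difference is that the paper computes $\dot e_S$ first and rewrites it as $\sigma:\dot\epsilon-\dot e_D$, whereas you compute $\dot e_S+\dot e_D$ directly and simplify to $\sigma:\dot\epsilon$.
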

\begin{proof}
Since $\sigma$ is symmetric, we have
$$
\begin{aligned}
\nabla\cdot\left(\sigma \vec u_t\right)&=
(\nabla\cdot \sigma) \cdot \vec u_t+\sigma : \nabla\vec u_t
=\rho \vec u_{tt}\cdot \vec u_t+\sigma :\dot\epsilon
=\left[\frac{\rho}{2}|\vec u_t|^2\right]_t+\sigma : \dot\epsilon
=\dot{e}_K+\sigma : \dot\epsilon.
\end{aligned}
$$
Since $\mathcal{S}$ is symmetric and $\dot\sigma=\mathcal{S}^{-1}\dot\epsilon+\tilde\sigma$,
we get $\sigma : \dot\epsilon=\dot{e}_S+\dot{e}_D$ as follows:
$$
\dot{e}_S=\frac{\sigma:\dot{\mathcal{S}}\sigma}{2}+\sigma:\mathcal{S}\dot{\sigma}
=\frac{\sigma:\dot{\mathcal{S}}\sigma}{2}+\sigma:\dot\epsilon+\sigma:\mathcal{S}\tilde\sigma
=\sigma:\dot{\epsilon}-\dot{e}_D.
$$
\end{proof}

Note that \eqref{def_energy} is not the only possible choice for $e_K$, $e_S$ and $\dot{e}_D$ to achieve the equation \eqref{decomp_energy}. The
 possibilities for alternate choices will be discussed in more detail in later sections.

\section{Main Finite Propagation Speed Result}\label{sec:fps}
We are now in a position to establish our main finite propagation speed results.

\begin{theorem}\label{elas_FPS_g}
Let $\vec u$ be a solution of
$$
\rho(x) \vec u_{tt} (x,t)=\nabla\cdot\sigma(x,t)\quad\mbox{ in } \Omega\times (0,T),
\quad\mbox{where}\quad\sigma(x,t)=\int_0^t \mathcal{C}(x,t,s) \epsilon_s(x,s)\,ds,
$$
and suppose
\begin{equation}\label{F-cond}
F(x,t):=\sigma(x,t): \mathcal{S}(x,t) \tilde\sigma(x,t)+\frac 1 2\sigma(x,t): \mathcal{S}_t(x,t) \sigma(x,t) \le 0.
\end{equation}
For any open ball $B_R(x_0)\subset \Omega$, if
$c=\sup_{(x,t)\in \bar B_{R} (x_0)\times [0,T]}\sqrt{{\lambda_{max}^{\mathcal{C}}(x,t)}/{\rho(x)}}<\infty$, then
$\vec u$ has finite propagation speed in $B_R(x_0)\times (0,T)$ with maximum propagation speed not exceeding $c$.
More precisely,
$$
\begin{aligned}
&\mbox{$\vec u(x,0)=\vec u_t(x,0)=0$ on $B_R(x_0)\subset \Omega$}\\
\Rightarrow\quad&
\mbox{$\vec u\equiv 0$ in the space-time cone $\Lambda(R/c)=\bigcup_{0<s<R/c} C_s$}.
\end{aligned}
$$
\end{theorem}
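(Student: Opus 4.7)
My plan is to apply Lemma~\ref{divergence} with the kinetic, strain, and dissipated energy densities supplied by Lemma~\ref{lemma1}, and to show that under hypothesis \eqref{F-cond} both terms on the right-hand side of identity \eqref{ese0} are nonpositive. The vanishing initial data $\vec u(x,0)=\vec u_t(x,0)=0$ on $B_R(x_0)$ force $\sigma(x,0)=\int_0^0 \mathcal{C}\,\epsilon_s\,ds=0$, so $e_K(x,0)=0$ and $e_S(x,0)=\frac{1}{2}\sigma(x,0)\!:\!\mathcal{S}(x,0)\sigma(x,0)=0$, giving $e(0)=0$. Hypothesis \eqref{F-cond} reads precisely $\dot e_D=-F\ge 0$, so the volume term in \eqref{ese0} is nonpositive. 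Once the lateral integrand is shown to be pointwise nonnegative, the identity collapses to $e(s)\le 0$.

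The heart of the proof is exactly that pointwise bound on the lateral surface $L$,
\[
e_K(x,t)+e_S(x,t)-\frac{1}{c}\bigl(\sigma(x,t)\vec u_t(x,t)\bigr)\cdot\frac{x-x_0}{|x-x_0|}\ge 0,
\]
and this is where I expect the only real technical obstacle. Writing $\hat n=(x-x_0)/|x-x_0|$, the cross term is $(\sigma\hat n)\cdot\vec u_t$; Cauchy--Schwarz followed by Young's inequality weighted by $\rho$ gives
\[
\frac{1}{c}\bigl|(\sigma\hat n)\cdot\vec u_t\bigr|\le \frac{\rho}{2}|\vec u_t|^2+\frac{1}{2c^2\rho}|\sigma\hat n|^2 = e_K+\frac{1}{2c^2\rho}|\sigma\hat n|^2.
\]
Because $\sigma$ is symmetric and $|\hat n|=1$, the operator-norm bound $|\sigma\hat n|^2\le|\sigma|^2$ holds (Frobenius dominates operator norm); combined with the Rayleigh estimate from the paper, $|\sigma|^2\le \lambda_{max}^{\mathcal{C}}(x,t)\,\sigma\!:\!\mathcal{S}(x,t)\sigma=2\lambda_{max}^{\mathcal{C}}(x,t)\,e_S$, and with the definition $c^2\rho(x)\ge \lambda_{max}^{\mathcal{C}}(x,t)$, the remaining term is bounded by $e_S$. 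The choice of the Young weight is calibrated precisely so that the definition of $c$ is used sharply.

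Plugging both pieces back into \eqref{ese0} yields $e(s)\le 0$ for every $s\in(0,R/c)$. Since $\rho>0$ and $\mathcal{S}(x,t)\succ 0$ make both $e_K$ and $e_S$ pointwise nonnegative, one also has $e(s)\ge 0$, so $e(s)\equiv 0$ on $[0,R/c)$. The vanishing of $e_K$ on each slice $C_s$ forces $\vec u_t\equiv 0$ throughout $\Lambda(R/c)$, and integrating in time against the vanishing initial data $\vec u(x,0)=0$ yields $\vec u\equiv 0$ in $\Lambda(R/c)$, which is the desired finite propagation speed statement. Aside from verifying the pointwise inequality on $L$, the argument is essentially bookkeeping built directly on Lemmas~\ref{divergence} and \ref{lemma1}.
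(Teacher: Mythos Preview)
Your proposal is correct and follows essentially the same route as the paper: invoke Lemmas~\ref{divergence} and \ref{lemma1}, use $\dot e_D=-F\ge 0$ for the volume term, and control the lateral integrand via Cauchy--Schwarz together with the Rayleigh bound $\sigma:\mathcal{S}\sigma\ge |\sigma|^2/\lambda_{max}^{\mathcal{C}}$ and the definition of $c$. The only cosmetic difference is that the paper completes the square $\tfrac12(\sqrt{\rho}|\vec u_t|-|\sigma|/(\sqrt{\rho}c))^2\ge 0$ where you invoke Young's inequality with weight $\rho$---these are the same estimate.
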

\begin{proof}
Start with defining $e_K$, $e_S$, $\dot{e}_D$, and $e$ as in \eqref{def_energy} and \eqref{def_store}. Then $e_K$, $e_S$, $e$, and $\dot{e}_D=-F$ are all nonnegative.
Also, by Lemma \ref{divergence} and \ref{lemma1}, \eqref{ese0} is valid:
$$
\begin{aligned}
e(s)-e(0)=&-\frac{c}{\sqrt{1+c^2}} \int_{L}
\left\{e_K(x,t)+e_S(x,t)-\frac{1}{c} (\sigma \vec u_t) \cdot \frac{x-x_0}{|x-x_0|}\right\}\,dS_{x,t}\\
&-\int_{\Lambda(s)} \dot e_D(x,t)\,dxdt,~\forall s\in [0,R/c).
\end{aligned}
$$
By the Cauchy-Schwarz inequality and the definition of $c$, the integrand of the first integral is nonnegative:
$$
\begin{aligned}
&e_K+e_S-\frac{1}{c} (\sigma \vec u_t) \cdot \frac{x-x_0}{|x-x_0|}\\
\ge&\frac{\rho}{2} |\vec u_t|^2 +\frac 1 2 \sigma: \mathcal{S}\sigma
-\frac{|\sigma||\vec u_t|}{c}
\ge\frac{1}{2}\left(\rho |\vec u_t|^2 +\frac{\left|\sigma\right|^2}{\lambda_{max}(x,t)}
-\frac{2|\sigma||\vec u_t|}{c}\right)\\
\ge&\frac{1}{2}\left(\rho |\vec u_t|^2 +\frac{\left|\sigma\right|^2}{\rho c^2}
-\frac{2|\sigma||\vec u_t|}{c}\right)
=\frac{1}{2}\left(\sqrt{\rho}|\vec u_t|-\frac{\left|\sigma\right|}{\sqrt{\rho}c}\right)^2\ge 0.
\end{aligned}
$$
Since we assume $\dot{e}_D=-F\ge 0$, we have $0\leq e(s)\le e(0)=0$, $\forall s\in [0,R/c)$ implying that
$\vec u(x,t)=0$ a.e. in the cone $\Lambda(R/c)$.
\end{proof}

Given $F$ as in \eqref{F-cond}, then the dissipated energy density introduced
in \eqref{def_energy} becomes $-F$. Then the inequality \eqref{F-cond} is nothing but the second law of thermodynamics.
Energy is always dissipating, which is a fundamental postulate in science. However, our choices of $e_S$ and $\dot{e}_D$ in \eqref{def_energy} are mathematical quantities that we make. In any given applications, our choice in  \eqref{def_energy} is not necessarily the actual physical strain energy density and dissipated energy  density. The point we make here is that  the constitutive equation itself does not uniquely determine
physical strain energy and dissipation;
one needs more information or an additional assumption to do so. We note, though, that there is no ambiguity in defining the kinetic energy; it is always $e_K=\frac{\rho}{2}|\vec u_t|^2$.
\begin{remark}\label{B_fps}
Accounting for this ambiguity, Theorem \ref{elas_FPS_g} may be further generalized;
whenever we are able to find a symmetric positive definite $\mathcal{B}(x,t)$ that satisfies
\begin{equation}\label{FB_dond}
F^\mathcal{B}(x,t)=\sigma: \big[\mathcal{B}\mathcal{S}^{-1}-\mathcal{I}\big] \dot\epsilon+\sigma: \mathcal{B} \tilde\sigma
+\frac 1 2 \sigma: \mathcal{B}_t \sigma \le 0,
\end{equation}
then $\vec u$ possesses finite propagation speed with the propagation speed not exceeding
$$
\sup_{(x,t)\in \bar B_{R} (x_0)\times [0,T]}\frac{1}{\sqrt{\rho(x)\lambda_{min}^{\mathcal{B}}(x,t)}}.
$$
In this case, when we define $e_S=\frac 1 2 \sigma:\mathcal{B}\sigma$ and $\dot{e}_D=-F^{\mathcal{B}}$, then $\sigma : \dot\epsilon=\dot{e}_S+\dot{e}_D$ implies the work
density decomposition \eqref{decomp_energy}, and thus the finite propagation speed property.
In Theorem \ref{elas_FPS_g}, we pick $\mathcal{B}=\mathcal{S}$ to make the first term of $F^\mathcal{B}(x,t)$ vanish.
\end{remark}

\begin{remark}\label{cond_suff_F}
Here we give a sufficient condition for  \eqref{F-cond}:

If
$\mathcal{C}(x,t,s)= e^{-t\mathcal{A}(x)}\mathcal{C}_0(x,s)$ for some $\mathcal{C}_0$ and $\mathcal{A}$ satisfying
\begin{itemize}
\item $\mathcal{C}_0$ is symmetric positive definite, $\dot{\mathcal{C}}_0$ is symmetric positive semi-definite and $\mathcal{A} \dot{\mathcal{C}}_0=\dot{\mathcal{C}}_0\mathcal{A}$,
\item $\mathcal{A}$ is symmetric positive semi-definite and $\mathcal{C}_0 \mathcal{A}=\mathcal{A}\mathcal{C}_0$,
\end{itemize}
then \eqref{F-cond} is satisfied, because
$F=\sigma: \mathcal{S}\mathcal{A} \sigma-\frac 1 2 \sigma: \mathcal{S}\dot{\mathcal{C}} \mathcal{S} \sigma$ and $\dot{\mathcal{C}}=
-\mathcal{A}\mathcal{C}+e^{-t\mathcal{A}} \dot{\mathcal{C}}_0$ imply
$\displaystyle F=-\frac 1 2 \sigma:[\mathcal{A}\mathcal{S}+\mathcal{S}e^{t\mathcal{A}}\dot{C}_0 \mathcal{S}]\sigma$.
\end{remark}

\section{Superposition Principle}
Many viscoelastic models including spring-dashpot models consist of a chain of small elementary viscoelastic units.
In this case, the stiffness tensor is in a linear superposition form:
\begin{equation}\label{C_superposition}
\mathcal{C}(x,t,s)=\sum_{i=1}^n \mathcal{C}^i(x,t,s),
\end{equation}
\begin{equation}\label{sigma_superposition}
\sigma(x,t)=\int_0^t \sum_{i=1}^n\mathcal{C}^i(x,t,s) \epsilon_s(x,s)\,ds,
\end{equation}
where each $\mathcal{C}^i$ satisfies \eqref{C-cond}. In this situation, rather than considering $\mathcal{C}$ as a whole, it may be more
convenient to regard \eqref{C_superposition} and \eqref{sigma_superposition} as a summation of simple units: Define
$\mathcal{S}^i(x,t)=[\mathcal{C}^i(x,t,t)]^{-1}$,
$$
\begin{gathered}
\sigma^i(x,t)=\int_0^t \mathcal{C}^i(x,t,s) \epsilon_s(x,s)\,ds,\quad \tilde\sigma^i(x,t)=\int_0^t \mathcal{C}^i_t(x,t,s) \epsilon_s(x,s)\,ds,\\
e_S^i(x,t)=\frac 1 2 \sigma^i(x,t): \mathcal{S}^i(x,t) \sigma^i(x,t),\\
\dot{e}^i_D(x,t)=-\left[\sigma^i(x,t): \mathcal{S}^i(x,t) \tilde\sigma^i(x,t)+\frac 1 2 \sigma^i(x,t): \mathcal{S}^i_t(x,t) \sigma^i(x,t)\right].
\end{gathered}
$$
Then we have a superposed version of Lemma \ref{lemma1}.
\begin{lemma}\label{lemma1_superposition}
Let $\vec u$ be a solution of
$$
\rho(x) \vec u_{tt} (x,t)=\nabla\cdot\sigma(x,t)
\quad\mbox{where}\quad\sigma(x,t)=\sum_{i=1}^n\int_0^t \mathcal{C}^i(x,t,s) \epsilon_s(x,s)\,ds.
$$
Define the kinetic energy density, the strain energy density and the dissipated energy density by
\begin{equation}\label{def_energy_superpose}
e_K(x,t)=\frac{\rho(x)}{2}|\vec u_t(x,t)|^2,\quad
e_S(x,t)=\sum_{i=1}^n e_S^i(x,t),\quad
\dot{e}_D(x,t)=\sum_{i=1}^n \dot{e}_D^i(x,t).
\end{equation}
Then we have the work density decomposition,
$\nabla\cdot\left(\sigma \vec u_t\right)=\dot{e}_K+\sigma : \dot\epsilon=\dot{e}_K+\dot{e}_S+\dot{e}_D$.
\end{lemma}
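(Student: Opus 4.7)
The plan is to reduce Lemma \ref{lemma1_superposition} to a componentwise application of Lemma \ref{lemma1}, exploiting the fact that, once the kinetic-energy term is isolated, the remaining work-density identity is linear in the underlying stiffness tensor. No new analytical machinery is needed.

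First, I would handle the kinetic-energy part exactly as in Lemma \ref{lemma1}. Since each $\mathcal{C}^i$ is symmetric, each $\sigma^i$ is a symmetric matrix, and hence so is $\sigma=\sum_i \sigma^i$. Using the equation of motion $\rho\vec u_{tt}=\nabla\cdot\sigma$ together with this symmetry gives
$$
\nabla\cdot(\sigma\,\vec u_t)=(\nabla\cdot\sigma)\cdot\vec u_t+\sigma:\nabla\vec u_t=\rho\,\vec u_{tt}\cdot\vec u_t+\sigma:\dot\epsilon=\dot e_K+\sigma:\dot\epsilon,
$$
where $\sigma:\nabla\vec u_t=\sigma:\dot\epsilon$ because $\sigma$ is symmetric.

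Next, I would split the strain-rate work by linearity, $\sigma:\dot\epsilon=\sum_{i=1}^n \sigma^i:\dot\epsilon$, and apply the single-unit calculation of Lemma \ref{lemma1} separately to each index $i$. Differentiating $\sigma^i(x,t)=\int_0^t\mathcal{C}^i(x,t,s)\epsilon_s(x,s)\,ds$ gives $\dot\sigma^i=[\mathcal{S}^i]^{-1}\dot\epsilon+\tilde\sigma^i$, since $\mathcal{C}^i(x,t,t)=[\mathcal{S}^i(x,t)]^{-1}$. Then, since each $\mathcal{S}^i$ is symmetric, the same chain of equalities as in Lemma \ref{lemma1} produces
$$
\dot e_S^i=\frac{\sigma^i:\dot{\mathcal{S}}^i\sigma^i}{2}+\sigma^i:\mathcal{S}^i\dot\sigma^i=\frac{\sigma^i:\dot{\mathcal{S}}^i\sigma^i}{2}+\sigma^i:\dot\epsilon+\sigma^i:\mathcal{S}^i\tilde\sigma^i=\sigma^i:\dot\epsilon-\dot e_D^i,
$$
so that $\sigma^i:\dot\epsilon=\dot e_S^i+\dot e_D^i$ for each $i$. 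Summing over $i$ yields $\sigma:\dot\epsilon=\dot e_S+\dot e_D$, which combined with the first step is the claimed work density decomposition.

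There is no real technical obstacle here; the only conceptual point worth emphasizing is the choice of $e_S$ and $\dot e_D$ as additive quantities in \eqref{def_energy_superpose} rather than attempting a single direct application of Lemma \ref{lemma1} to the summed tensor $\mathcal{C}=\sum_i\mathcal{C}^i$. The latter would require the composite compliance $\mathcal{S}=[\sum_i(\mathcal{S}^i)^{-1}]^{-1}$ and a complicated $\tilde\sigma$, and no obvious cancellation would occur. By instead decoupling the units, no cross terms in $i$ ever appear, since each $\sigma^i:\dot\epsilon$ is absorbed into its own $\dot e_S^i+\dot e_D^i$; the superposed statement then follows from $n$ independent copies of the single-unit identity.
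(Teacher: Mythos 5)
Your proposal is correct and follows essentially the same route as the paper: isolate $\dot e_K$ via the symmetry of $\sigma$ and the equation of motion, then use linearity to write $\sigma:\dot\epsilon=\sum_i\sigma^i:\dot\epsilon$ and absorb each term into $\dot e_S^i+\dot e_D^i$ by the single-unit identity of Lemma \ref{lemma1}. The paper's proof is just a one-line version of the same argument.
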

\begin{proof}
Since $\sigma=\sum_{i=1}^n \sigma^i$ and $\epsilon$ is independent of $i$, just as in the proof of Lemma \ref{lemma1}, we get
$\displaystyle
\nabla\cdot\left(\sigma \vec u_t\right)=\dot{e}_K+\sigma : \dot\epsilon=\dot{e}_K+\sum_{i=1}^n \sigma^i : \dot\epsilon
=\dot{e}_K+\sum_{i=1}^n \left(\dot{e}_S^i+\dot{e}_D^i\right)$.
\end{proof}

\begin{theorem}\label{elas_FPS_s}
Let $\vec u$ be a solution of
$$
\rho(x) \vec u_{tt} (x,t)=\nabla\cdot\sigma(x,t)\quad\mbox{ in } \Omega\times (0,T),
\quad\mbox{where}\quad\sigma(x,t)=\sum_{i=1}^n\int_0^t \mathcal{C}^i(x,t,s) \epsilon_s(x,s)\,ds
$$
and suppose
\begin{equation}\label{F_cond_superposition}
\begin{aligned}
F_{sum}(x,t):=&-\dot{e}_D(x,t)\\
=&\sum_{i=1}^n\left(\sigma^i(x,t): \mathcal{S}^i(x,t) \tilde\sigma^i(x,t)+\frac 1 2 \sigma^i(x,t): \mathcal{S}^i_t(x,t) \sigma^i(x,t)\right)
\le 0.
\end{aligned}
\end{equation}
$$
$$
For any open ball $B_R(x_0)\subset \Omega$, if
$\displaystyle c=\sup_{(x,t)\in \bar B_{R} (x_0)\times [0,T]}\sqrt{{\sum_{i=1}^n\lambda_{max}^{\mathcal{C}^i}(x,t)}/{\rho(x)}}<\infty$, then
$\vec u$ has finite propagation speed in $B_R(x_0)\times (0,T)$ with maximum propagation speed not exceeding $c$.
\end{theorem}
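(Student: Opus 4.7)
The plan is to imitate the proof of Theorem~\ref{elas_FPS_g}, but substituting Lemma~\ref{lemma1_superposition} for Lemma~\ref{lemma1}. First I would take the energy densities $e_K$, $e_S=\sum_{i=1}^n e_S^i$, and $\dot e_D=\sum_{i=1}^n \dot e_D^i$ furnished by \eqref{def_energy_superpose}. By Lemma~\ref{lemma1_superposition}, these satisfy the work density decomposition \eqref{work_density}, so Lemma~\ref{divergence} applies and yields the identity \eqref{ese0}. The assumption $\vec u(\cdot,0)=\vec u_t(\cdot,0)=0$ on $B_R(x_0)$ gives $e(0)=0$, and the hypothesis $F_{sum}\le 0$ gives $\dot e_D\ge 0$, so the volume term in \eqref{ese0} is nonpositive. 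Thus everything reduces to showing that the boundary integrand on $L$ is nonnegative.

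The main (and only nontrivial) step is to establish
$$
e_K(x,t)+e_S(x,t)-\frac{1}{c}\bigl(\sigma\vec u_t\bigr)\cdot\frac{x-x_0}{|x-x_0|}\ge 0
$$
with the new definition of $e_S$ and the new $c$. Writing $a=|\vec u_t|$, $b_i=|\sigma^i|$, and $\lambda_i=\lambda_{max}^{\mathcal{C}^i}(x,t)$, the Rayleigh quotient estimate gives $\sigma^i:\mathcal{S}^i\sigma^i\ge b_i^2/\lambda_i$, and the triangle inequality together with Cauchy--Schwarz yields $|\sigma\vec u_t|\le a\sum_i b_i$. So it suffices to prove
$$
\frac{\rho}{2}a^2+\sum_{i=1}^n\frac{b_i^2}{2\lambda_i}\ge \frac{a}{c}\sum_{i=1}^n b_i.
$$
By Cauchy--Schwarz, $\bigl(\sum_i b_i\bigr)^2=\bigl(\sum_i\tfrac{b_i}{\sqrt{\lambda_i}}\sqrt{\lambda_i}\bigr)^2\le\bigl(\sum_i b_i^2/\lambda_i\bigr)\bigl(\sum_i\lambda_i\bigr)\le \rho c^2\sum_i b_i^2/\lambda_i$, using the definition of $c$. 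Hence $\frac{a}{c}\sum_i b_i\le a\sqrt{\rho\sum_i b_i^2/\lambda_i}$, and one application of AM--GM produces exactly the required right-hand side.

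Combining these pieces, \eqref{ese0} forces $0\le e(s)\le e(0)=0$ for every $s\in [0,R/c)$, so $e_K\equiv 0$ throughout $\Lambda(R/c)$, implying $\vec u_t\equiv 0$ and hence $\vec u\equiv 0$ there. The main obstacle I anticipate is precisely the boundary estimate: in the single-tensor case of Theorem~\ref{elas_FPS_g} the analogous inequality reduces directly to a perfect square, whereas here one must handle the interaction between the sum of norms $\sum_i b_i$ and the diagonal quadratic form $\sum_i b_i^2/\lambda_i$, which is the reason the two Cauchy--Schwarz applications appear in sequence and the reason the speed $c$ involves $\sum_i\lambda_{max}^{\mathcal{C}^i}$ rather than $\max_i\lambda_{max}^{\mathcal{C}^i}$.
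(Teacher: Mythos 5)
Your proposal is correct and follows the paper's proof essentially step for step: invoke Lemma \ref{lemma1_superposition} to get the work density decomposition, feed it into Lemma \ref{divergence}, use $F_{sum}\le 0$ and $e(0)=0$, and reduce everything to the nonnegativity of the boundary integrand on $L$. The only (cosmetic) divergence is in that last elementary estimate: the paper splits the kinetic energy among the $n$ summands in proportion to $\lambda_{max}^{\mathcal{C}^i}/\sum_j\lambda_{max}^{\mathcal{C}^j}$ and completes a square term by term, whereas you apply Cauchy--Schwarz to $\sum_i|\sigma^i|$ against the weights $\lambda_{max}^{\mathcal{C}^i}$ and then one AM--GM; the two computations are equivalent and yield the same speed bound.
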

\begin{proof}
Let $\lambda(x,t)=\sum_{i=1}^n\lambda^{\mathcal{C}^i}_{max}(x,t)$. Then
\begin{align*}
&e_K(x,t)+e_S(x,t)-\frac{1}{c} (\sigma \vec u_t) \cdot \frac{x-x_0}{|x-x_0|}
=\frac{\rho}{2}|\vec u_t|^2+\sum_{i=1}^n \left(e_S^i(x,t)-\frac{1}{c} (\sigma^i \vec u_t) \cdot \frac{x-x_0}{|x-x_0|}\right)\\
=&\sum_{i=1}^n \left(\frac{\rho  \lambda_{max}^{\mathcal{C}^i}}{2\lambda}|\vec u_t|^2+\frac 1 2 \sigma^i: \mathcal{S}^i \sigma^i-\frac{1}{c} (\sigma^i \vec u_t) \cdot \frac{x-x_0}{|x-x_0|}\right).
\end{align*}
Recalling the proof of Theorem \ref{elas_FPS_g}, it suffices to show the above is nonnegative. Here we show each component is nonnegative:
the above parenthesis is larger than or equal to
$$
\begin{aligned}
&\frac{\rho  \lambda_{max}^{\mathcal{C}^i}}{2\lambda}|\vec u_t|^2+\frac 1 2 \sigma^i: \mathcal{S}^i \sigma^i-\frac{|\sigma^i||\vec u_t|}{c}
\ge\frac{1}{2}\left(\frac{\rho  \lambda_{max}^{\mathcal{C}^i}}{\lambda} |\vec u_t|^2 +\frac{|\sigma^i|^2}{\lambda_{max}^{\mathcal{C}^i}}
-\frac{2|\sigma^i||\vec u_t|}{c}\right)\\
=&\frac{\lambda_{max}^{\mathcal{C}^i}}{2}\left(\frac{\rho}{\lambda} |\vec u_t|^2 +\frac{|\sigma^i|^2}{(\lambda_{max}^{\mathcal{C}^i})^2}
-\frac{2|\sigma^i||\vec u_t|}{c\lambda_{max}^{\mathcal{C}^i}}\right)\ge\frac{\lambda_{max}^{\mathcal{C}^i}}{2}\left(\frac{|\vec u_t|^2}{c^2}  +\frac{|\sigma^i|^2}{(\lambda_{max}^{\mathcal{C}^i})^2}
-\frac{2|\sigma^i||\vec u_t|}{c\lambda_{max}^{\mathcal{C}^i}}\right)\\
=&\frac{\lambda_{max}^{\mathcal{C}^i}}{2}\left(\frac{|\vec u_t|}{c}-\frac{|\sigma^i|}{\lambda_{max}^{\mathcal{C}^i}}
\right)^2\ge 0.
\end{aligned}
$$
\end{proof}

Now we consider a continuous superposition,
\begin{equation}\label{C_superposition_int}
\mathcal{C}(x,t,s)=\int_0^\infty \mathcal{C}^\tau(x,t,s)d\beta(\tau),
\end{equation}
where $\beta$ is a finite Borel measure and $\mathcal{C}^\tau$ satisfies \eqref{C-cond} for each $\tau$.
Then our smoothness assumptions imply
\begin{equation}\label{sigma_superposition_int}
\sigma(x,t)=\int_0^t \int_0^\infty\mathcal{C}^\tau(x,t,s)d\beta(\tau) \epsilon_s(x,s)\,ds
=\int_0^\infty \int_0^t\mathcal{C}^\tau(x,t,s) \epsilon_s(x,s)\,dsd\beta(\tau),
\end{equation}
and, as before, we define $\mathcal{S}^\tau(x,t)=[\mathcal{C}^\tau(x,t,t)]^{-1}$,
$$
\begin{gathered}
\sigma^\tau(x,t)=\int_0^t \mathcal{C}^\tau(x,t,s) \epsilon_s(x,s)\,ds,\quad \tilde\sigma^\tau(x,t)=\int_0^t \mathcal{C}^\tau_t(x,t,s) \epsilon_s(x,s)\,ds,\\
e_S^\tau(x,t)=\frac 1 2 \sigma^\tau(x,t): \mathcal{S}^\tau(x,t) \sigma^\tau(x,t),\\
\dot{e}_D^\tau(x,t)=-\left[\sigma^\tau(x,t): \mathcal{S}^\tau(x,t) \tilde\sigma^\tau(x,t)+\frac 1 2 \sigma^\tau(x,t): \mathcal{S}_t^\tau(x,t) \sigma^\tau(x,t)\right].
\end{gathered}
$$
Then we have an integral version of Lemma \ref{lemma1}.
\begin{lemma}\label{lemma1_superposition_int}
Let $\vec u$ be a solution of
$$
\rho(x) \vec u_{tt} (x,t)=\nabla\cdot\sigma(x,t)
\quad\mbox{where}\quad\sigma(x,t)=\int_0^\infty \int_0^t \mathcal{C}^\tau(x,t,s) \epsilon_s(x,s)\,ds d\beta(\tau).
$$
Define the kinetic energy density, the strain energy density and the dissipated energy density by
\begin{equation}\label{def_energy_superpose_int}
e_K(x,t)=\frac{\rho(x)}{2}|\vec u_t(x,t)|^2,\quad
e_S(x,t)=\int_0^\infty e_S^\tau(x,t)d\beta(\tau),\quad
\dot{e}_D(x,t)=\int_0^\infty \dot{e}_D^\tau(x,t)d\beta(\tau).
\end{equation}
Then we have the work density decomposition,
$\nabla\cdot\left(\sigma \vec u_t\right)=\dot{e}_K+\sigma : \dot\epsilon=\dot{e}_K+\dot{e}_S+\dot{e}_D$.
\end{lemma}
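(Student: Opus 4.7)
My plan is to mirror the argument for the finite-sum version, Lemma \ref{lemma1_superposition}, replacing the finite sum $\sum_{i=1}^n$ by the Borel integral $\int_0^\infty \cdot\, d\beta(\tau)$ throughout, and to rely on the standing smoothness/integrability assumption announced at the start of Section 3 to move time derivatives freely under the integral in $\tau$.

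First I will reproduce verbatim the kinematic step from Lemma \ref{lemma1}. Using that $\sigma$ is symmetric and that $\vec u$ solves $\rho\vec u_{tt}=\nabla\cdot\sigma$, the product rule yields
$$
\nabla\cdot(\sigma \vec u_t)=(\nabla\cdot\sigma)\cdot \vec u_t+\sigma:\nabla \vec u_t=\rho \vec u_{tt}\cdot \vec u_t+\sigma:\dot\epsilon=\dot e_K+\sigma:\dot\epsilon.
$$
This step is independent of how $\mathcal{C}$ decomposes, so it carries over unchanged to the continuous superposition.

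Next I will handle the constitutive identity $\sigma:\dot\epsilon=\dot e_S+\dot e_D$. Using the Fubini-type identity already recorded in \eqref{sigma_superposition_int}, I write $\sigma(x,t)=\int_0^\infty \sigma^\tau(x,t)\,d\beta(\tau)$, so that
$$
\sigma:\dot\epsilon=\int_0^\infty \sigma^\tau:\dot\epsilon\,d\beta(\tau).
$$
For each fixed $\tau$, the pointwise identity $\sigma^\tau:\dot\epsilon=\dot e_S^\tau+\dot e_D^\tau$ follows by exactly the computation used in Lemma \ref{lemma1}: differentiate $e_S^\tau=\tfrac12\sigma^\tau:\mathcal{S}^\tau\sigma^\tau$, use the identity $\dot\sigma^\tau=(\mathcal{S}^\tau)^{-1}\dot\epsilon+\tilde\sigma^\tau$ obtained by differentiating the defining integral for $\sigma^\tau$, and read off the $\dot e_D^\tau$ contribution. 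Integrating this identity against $d\beta(\tau)$ and invoking the definitions in \eqref{def_energy_superpose_int} gives $\sigma:\dot\epsilon=\dot e_S+\dot e_D$, which when combined with the first step completes the proof.

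The only genuine technical point is the interchange of $\partial_t$ with $\int_0^\infty d\beta(\tau)$, both when forming $\dot e_S$ from $e_S^\tau$ and when expressing $\dot\sigma^\tau$ inside the integral. This is the analog of the trivial observation, in the finite-sum case, that a derivative commutes with a finite sum, and it is the one place where the extension from Lemma \ref{lemma1_superposition} is not purely notational. Under the blanket smoothness/integrability hypothesis stated at the top of Section 3, the interchange is permitted, so the passage from finite to continuous superposition introduces no substantive obstacle, and no extra condition on $\beta$ beyond its being a finite Borel measure needs to be invoked within the proof itself.
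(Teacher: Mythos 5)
Your proposal is correct and follows essentially the same route as the paper: establish $\nabla\cdot(\sigma\vec u_t)=\dot e_K+\sigma:\dot\epsilon$ exactly as in Lemma \ref{lemma1}, apply the per-$\tau$ identity $\sigma^\tau:\dot\epsilon=\dot e_S^\tau+\dot e_D^\tau$, and integrate over $\tau$ using that $\epsilon$ does not depend on $\tau$. Your explicit remark about interchanging $\partial_t$ with $\int_0^\infty\,d\beta(\tau)$ is the only addition; the paper subsumes this under its blanket smoothness assumption from the start of Section 3.
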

\begin{proof}
As in the proof of Lemma \ref{lemma1}, we have
$\nabla\cdot\left(\sigma \vec u_t\right)=\dot{e}_K+\sigma : \dot\epsilon$, and also for each fixed $\tau$,
$$
\sigma^\tau(x,t):\dot\epsilon(x,t)=\dot e_S^\tau(x,t)+\dot e_D^\tau(x,t).
$$
Since $\epsilon$ is independent of $\tau$, integrating over $\tau$, we get
$\sigma:\dot\epsilon = \dot e_S+\dot e_D$.
\end{proof}

\begin{theorem}\label{elas_FPS_s_int}
Let $\vec u$ be a solution of
$$
\rho(x) \vec u_{tt} (x,t)=\nabla\cdot\sigma(x,t)\quad\mbox{ in } \Omega\times (0,T),
$$
where $\sigma(x,t)=\int_0^t \int_0^\infty\mathcal{C}^\tau(x,t,s)d\beta(\tau) \epsilon_s(x,s)\,ds$
and suppose
\begin{equation}\label{F_cond_superposition_int}
\begin{aligned}
F_{int}(x,t):=&-\dot{e}_D(x,t)\\
=&\int_0^\infty\left(\sigma^\tau(x,t): \mathcal{S}^\tau(x,t) \tilde\sigma^\tau(x,t)+\frac 1 2 \sigma^\tau(x,t): \mathcal{S}^\tau_t(x,t) \sigma^\tau(x,t)\right)d\beta(\tau)\le 0.
\end{aligned}
\end{equation}
Let $\lambda(x,t)=\int_0^\infty \lambda_{max}^{\mathcal{C}^\tau}(x,t)d\beta(\tau)$ and suppose
$c=\sup_{(x,t)\in \bar B_{R} (x_0)\times [0,T]}\sqrt{{\lambda(x,t)}/{\rho(x)}}<\infty$. Then
for any open ball $B_R(x_0)\subset \Omega$,
$\vec u$ has finite propagation speed in $B_R\times (0,T)$ with maximum propagation speed not exceeding $c$.
\end{theorem}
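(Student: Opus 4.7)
The plan is to mirror the proof of Theorem \ref{elas_FPS_s}, replacing finite sums by integrals against the measure $d\beta(\tau)$, and to rely on Lemma \ref{divergence} together with the integral-version work density decomposition already supplied by Lemma \ref{lemma1_superposition_int}. First I would set $e_K$, $e_S$, $e_D$ as in \eqref{def_energy_superpose_int} and $e(s)$ as in \eqref{def_store}. By the positive semidefiniteness of each $\mathcal{S}^\tau$, each $e_S^\tau$ is nonnegative, so $e_S$ and $e$ are nonnegative; by hypothesis \eqref{F_cond_superposition_int}, $\dot e_D=-F_{int}\ge 0$. Applying Lemma \ref{divergence} with this decomposition yields \eqref{ese0}, and since $e(0)=0$ it reduces to
$$
e(s)=-\frac{c}{\sqrt{1+c^2}}\int_L\Bigl\{e_K+e_S-\frac{1}{c}(\sigma\vec u_t)\cdot\frac{x-x_0}{|x-x_0|}\Bigr\}dS_{x,t}-\int_{\Lambda(s)}\dot e_D\,dx\,dt.
$$

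The heart of the proof is to show the integrand on $L$ is pointwise nonnegative. Using $\sigma=\int_0^\infty\sigma^\tau d\beta(\tau)$ and introducing the normalized weights $w^\tau:=\lambda_{max}^{\mathcal{C}^\tau}(x,t)/\lambda(x,t)$, which satisfy $\int_0^\infty w^\tau d\beta(\tau)=1$, I would rewrite
$$
\frac{\rho}{2}|\vec u_t|^2=\int_0^\infty\frac{\rho\,\lambda_{max}^{\mathcal{C}^\tau}}{2\lambda}|\vec u_t|^2\,d\beta(\tau),
$$
so that the entire bracketed quantity becomes $\int_0^\infty G^\tau(x,t)\,d\beta(\tau)$ with
$$
G^\tau:=\frac{\rho\,\lambda_{max}^{\mathcal{C}^\tau}}{2\lambda}|\vec u_t|^2+\frac12\sigma^\tau:\mathcal{S}^\tau\sigma^\tau-\frac{1}{c}(\sigma^\tau\vec u_t)\cdot\frac{x-x_0}{|x-x_0|}.
$$
For each fixed $\tau$ the same Cauchy–Schwarz bound $|(\sigma^\tau\vec u_t)\cdot(x-x_0)/|x-x_0||\le|\sigma^\tau||\vec u_t|$ together with the Rayleigh quotient inequality $\sigma^\tau:\mathcal{S}^\tau\sigma^\tau\ge|\sigma^\tau|^2/\lambda_{max}^{\mathcal{C}^\tau}$ and the definition of $c$ (which gives $\rho/\lambda\ge 1/c^2$) lets us complete the square exactly as in Theorem \ref{elas_FPS_s}, yielding
$$
G^\tau\ge\frac{\lambda_{max}^{\mathcal{C}^\tau}}{2}\left(\frac{|\vec u_t|}{c}-\frac{|\sigma^\tau|}{\lambda_{max}^{\mathcal{C}^\tau}}\right)^2\ge 0.
$$
Integrating against the nonnegative measure $\beta$ preserves nonnegativity, so the $L$-integrand is nonnegative and, together with $\dot e_D\ge 0$, forces $0\le e(s)\le 0$ on $[0,R/c)$. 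Hence $\vec u_t\equiv 0$ a.e.\ in $\Lambda(R/c)$, and since $\vec u(\cdot,0)=0$ we conclude $\vec u\equiv 0$ in the cone.

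The main obstacle, in my view, is not any single algebraic estimate but the bookkeeping required to justify rearranging the $\tau$-integrals in the surface term and to handle the possibility that $\lambda(x,t)=0$ on some portion of $L$. The former is covered by the paper's blanket smoothness convention (Fubini applies), and the latter is harmless: where $\lambda(x,t)=0$ we must also have $\sigma^\tau\equiv 0$ for $\beta$-a.e.\ $\tau$ (all $\lambda_{max}^{\mathcal{C}^\tau}$ vanish), so $\sigma=0$ there and the $L$-integrand reduces to $\frac{\rho}{2}|\vec u_t|^2\ge 0$; one simply excises the $\lambda=0$ set and runs the weighted decomposition above on its complement. Nothing else in the argument changes relative to the finite-sum case.
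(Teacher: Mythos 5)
Your proposal is correct and follows essentially the same route as the paper: invoke Lemma \ref{lemma1_superposition_int} and Lemma \ref{divergence}, split the kinetic term over $\tau$ with the weights $\lambda_{max}^{\mathcal{C}^\tau}/\lambda$, and complete the square for each $\tau$ exactly as in Theorem \ref{elas_FPS_s}. The extra care you take with Fubini and the $\lambda=0$ set is fine but not needed under the paper's standing assumptions (each $\mathcal{C}^\tau(x,t,t)\succ 0$ forces $\lambda>0$ for nontrivial $\beta$).
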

\begin{proof}
As before,
\begin{align*}
&e_K(x,t)+e_S(x,t)-\frac{1}{c} (\sigma \vec u_t) \cdot \frac{x-x_0}{|x-x_0|}\\
=&\frac{\rho}{2}|\vec u_t|^2+\int_0^\infty \left(e_S^\tau(x,t)-\frac{1}{c} (\sigma^\tau(x,t) \vec u_t) \cdot \frac{x-x_0}{|x-x_0|}\right)d\beta(\tau)\\
=&\int_0^\infty \left(\frac{\rho \lambda_{max}^{\mathcal{C}^\tau}(x,t)}{2\lambda(x,t)}|\vec u_t|^2+e_S^\tau(x,t)-\frac{1}{c} (\sigma^\tau(x,t) \vec u_t) \cdot \frac{x-x_0}{|x-x_0|}\right)d\beta(\tau).
\end{align*}
Recalling the proof of Theorem \ref{elas_FPS_g}, it suffices to show the above is nonnegative. Here we show for each $\tau$ the integrand is nonnegative:
the above parenthesis is larger than or equal to
$$
\begin{aligned}
&\frac{\rho  \lambda_{max}^{\mathcal{C}^\tau}(x,t)}{2\lambda(x,t)}|\vec u_t|^2
+\frac 1 2 \sigma^\tau(x,t): \mathcal{S}^\tau(x,t) \sigma^\tau(x,t)-\frac{|\sigma^\tau(x,t)||\vec u_t|}{c}\\
\ge &\frac{1}{2}\left(\frac{\rho  \lambda_{max}^{\mathcal{C}^\tau}(x,t)}{\lambda(x,t)}|\vec u_t|^2 +\frac{|\sigma^\tau(x,t)|^2}{\lambda_{max}^{\mathcal{C}^\tau}(x,t)}
-\frac{2|\sigma^\tau(x,t)||\vec u_t|}{c}\right)\\
=&\frac{\lambda_{max}^{\mathcal{C}^\tau}(x,t)}{2}\left(\frac{\rho}{\lambda} |\vec u_t|^2 +\frac{|\sigma^\tau(x,t)|^2}{(\lambda_{max}^{\mathcal{C}^\tau})^{2}}
-\frac{2|\sigma^\tau(x,t)||\vec u_t|}{c\lambda_{max}^{\mathcal{C}^\tau}}\right)\\
\ge&\frac{\lambda_{max}^{\mathcal{C}^\tau}}{2}\left(\frac{|\vec u_t|^2}{c^2}+\frac{|\sigma(x,t;\tau)|^2}{(\lambda_{max}^{\mathcal{C}^\tau})^{2}}
-\frac{2|\sigma^\tau(x,t)||\vec u_t|}{c\lambda_{max}^{\mathcal{C}^\tau}}\right)
=\frac{\lambda_{max}^{\mathcal{C}^\tau}}{2}\left(\frac{|\vec u_t|}{c}-\frac{|\sigma^\tau(x,t)|}{\lambda_{max}^{\mathcal{C}^\tau}}\right)^2\ge 0.
\end{aligned}
$$
\end{proof}

\begin{remark}
If each $\mathcal{C}^i$
satisfies \eqref{F-cond}, then $\mathcal{C}=\sum_{i=1}^n \mathcal{C}^i$ automatically satisfies \eqref{F_cond_superposition}.
Therefore, this superposition principle is extremely useful when
solutions, $\vec{u}^i$, of $\rho \vec u_{tt}^i =\nabla\cdot\sigma^i$, are known to possess the finite propagation speed property for
$i=1,\ldots,n$ (For the continuous superposition case, the same argument holds if \eqref{F-cond} is satisfied for each $\tau$).
However, the estimated maximum propagation speed through the superposition principle may be overestimated. To obtain a shaper estimate, we have to consider the kernel $\mathcal{C}$ as a whole. In this case, we have a different strain
energy density, $e_S=\frac 1 2 \sigma: \mathcal{S} \sigma$,
where the needed properties of $\mathcal{S}=(\mathcal{C}^1+\cdots+\mathcal{C}^n)^{-1}$ in most cases cannot be straightforwardly computed
in terms of those same properties of $\mathcal{C}^i$.
The difficulty lies mainly in the fact that $(\mathcal{C}^1+\cdots+\mathcal{C}^n)^{-1}\neq (\mathcal{C}^1)^{-1}+\cdots+(\mathcal{C}^n)^{-1}$.
\end{remark}

Combining the superposition principles with Remark \ref{cond_suff_F}, we immediately see the chain of exponential (in $t$) type stiffness tensors
(including the generalized Linear Solid Models) exhibits finite propagation speed.

\begin{corollary}\label{exponential_kernel}
Suppose the stiffness kernel is given by the following exponential form in time $t$,
\begin{equation}\label{C_sum_exp}
\mathcal{C}(x,t,s)=\sum_{i=1}^n \mathcal{C}^i(x,t,s)+\int_0^\infty e^{-t \mathcal{A}^\tau(x)} \mathcal{C}_0^\tau(x,s)d\beta(\tau),
\end{equation}
where each $\mathcal{C}^i$
satisfies \eqref{F-cond} and, for each $\tau$, $\mathcal{A}^\tau(x)$ and $\mathcal{C}_0^\tau(x,s)$ satisfy the conditions given in
Remark \ref{cond_suff_F}. Then
$\vec{u}$ has finite propagation speed with maximum propagation speed not exceeding
$$
c=\sup_{(x,t)\in \bar B_{R} (x_0)\times [0,T]}\left[\frac{\sum_{i=1}^n\lambda_{max}^{\mathcal{C}^i}(x,t)+\int_0^\infty \lambda_{max}^\tau(x,t)d\beta(\tau)}{\rho(x)}\right]^{\frac 1 2}.
$$
Here $\lambda_{max}^\tau(x,t)$ is the maximum eigenvalue of $e^{-t \mathcal{A}^\tau(x)} \mathcal{C}_0^\tau(x,t)$.
\end{corollary}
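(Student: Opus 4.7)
The plan is to treat the hybrid kernel \eqref{C_sum_exp} as a combined discrete-continuous superposition, so that Theorems~\ref{elas_FPS_s} and \ref{elas_FPS_s_int}, together with Remark~\ref{cond_suff_F}, fit into a single work density decomposition. Concretely, for $i=1,\ldots,n$ I would keep the notation $\sigma^i,\;e_S^i,\;\dot e_D^i$ from Section~5, and for each $\tau$ I would set $\mathcal{C}^\tau(x,t,s):=e^{-t\mathcal{A}^\tau(x)}\mathcal{C}_0^\tau(x,s)$ together with the corresponding $\sigma^\tau,\;e_S^\tau,\;\dot e_D^\tau$. Defining
\[
e_S=\sum_{i=1}^n e_S^i+\int_0^\infty e_S^\tau\,d\beta(\tau),\qquad
\dot e_D=\sum_{i=1}^n \dot e_D^i+\int_0^\infty \dot e_D^\tau\,d\beta(\tau),
\]
and using $\sigma=\sum_i\sigma^i+\int_0^\infty\sigma^\tau d\beta(\tau)$ together with the fact that $\epsilon$ depends on neither $i$ nor $\tau$, the proofs of Lemmas~\ref{lemma1_superposition} and \ref{lemma1_superposition_int} combine verbatim to give $\nabla\cdot(\sigma\vec u_t)=\dot e_K+\dot e_S+\dot e_D$.

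Next, I would verify $\dot e_D\ge 0$. By hypothesis each $\mathcal{C}^i$ satisfies \eqref{F-cond}, so $\dot e_D^i\ge 0$. Applying Remark~\ref{cond_suff_F} to each $\tau$ shows $\mathcal{C}^\tau$ also satisfies \eqref{F-cond}, hence $\dot e_D^\tau\ge 0$; summing and integrating preserves the sign. With vanishing initial data on $B_R(x_0)$, Lemma~\ref{divergence} then yields
\[
e(s)=-\frac{c}{\sqrt{1+c^2}}\int_L\left\{e_K+e_S-\frac{1}{c}(\sigma\vec u_t)\cdot\frac{x-x_0}{|x-x_0|}\right\}dS_{x,t}-\int_{\Lambda(s)}\dot e_D\,dxdt,
\]
so it suffices to show the boundary integrand is nonnegative.

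For that step, set $\lambda(x,t)=\sum_{i=1}^n\lambda_{max}^{\mathcal{C}^i}(x,t)+\int_0^\infty\lambda_{max}^\tau(x,t)d\beta(\tau)$ and partition the kinetic energy density by the weights $\lambda_{max}^{\mathcal{C}^i}/\lambda$ and $\lambda_{max}^\tau/\lambda$, which sum and integrate to $1$. The boundary integrand then splits into a finite sum plus a $d\beta$-integral, each piece of which is exactly the expression already shown nonnegative in the proofs of Theorems~\ref{elas_FPS_s} and \ref{elas_FPS_s_int}; the key inequality is $\rho/\lambda\ge 1/c^2$, supplied by the definition of $c$, which enables the completion of squares $\bigl(|\vec u_t|/c-|\sigma^i|/\lambda_{max}^{\mathcal{C}^i}\bigr)^2\ge 0$ and its $\tau$-analogue. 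Hence $0\le e(s)\le 0$, forcing $\vec u\equiv 0$ in $\Lambda(R/c)$.

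The main bookkeeping obstacle is precisely the choice of kinetic-energy partition: the weights must be taken relative to the \emph{combined} total $\lambda$ rather than to separate discrete and continuous totals, since otherwise the nonnegativity argument on $L$ fails for one of the two pieces. Beyond this allocation, the proof is a straightforward merger of the arguments already recorded for Theorems~\ref{elas_FPS_s} and \ref{elas_FPS_s_int}, with Remark~\ref{cond_suff_F} supplying the dissipativity of the exponential factor in \eqref{C_sum_exp}.
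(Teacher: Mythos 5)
Your proposal is correct and follows exactly the route the paper intends: the paper offers no written proof beyond the sentence ``Combining the superposition principles with Remark~\ref{cond_suff_F}, we immediately see\ldots,'' and your argument is precisely that combination carried out explicitly --- Remark~\ref{cond_suff_F} supplies \eqref{F-cond} for each $\mathcal{C}^\tau=e^{-t\mathcal{A}^\tau}\mathcal{C}_0^\tau$, and the boundary-integrand estimate merges the proofs of Theorems~\ref{elas_FPS_s} and \ref{elas_FPS_s_int}. Your observation that the kinetic-energy weights must be taken relative to the combined total $\lambda=\sum_i\lambda_{max}^{\mathcal{C}^i}+\int_0^\infty\lambda_{max}^\tau\,d\beta(\tau)$ is the one detail the paper leaves implicit, and you have handled it correctly.
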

Here the measure $\beta$ can be a finite sum of Dirac deltas.

\section{Examples}
We start with isotropic models that have the following form of stiffness tensor,
$$
\mathcal{C}=\lambda I\otimes I + 2\mu\mathcal{I},
$$
where $I$ and $\mathcal{I}$ denote the identity matrix and the identity $4$-tensor, respectively.

\subsection{Purely elastic isotropic case (Time independent stiffness tensor)}
Consider
$$
\mathcal{C}(x,t,s)=\mathcal{C}(x)=\lambda(x) I\otimes I + 2\mu(x)\mathcal{I},
$$
where $\lambda(x),\mu(x)>0$. This case was already established in \cite{MY}. We recast this example using the tool we developed in this paper.
Since $\mathcal{C}_t$, $\mathcal{S}_t$, $\tilde \sigma$ vanish, obviously we get $F=0$.
So we have finite propagation speed with
$\displaystyle
c=\sup_{x\in \bar B_{R} (x_0)} \sqrt{\frac{\lambda(x)+2\mu(x)}{\rho(x)}}$.

\subsection{Stiffness tensor of exponential convolution type}
Consider
$$
\mathcal{C}(x,t,s)=\mathcal{C}(x,t-s)=\lambda(x)e^{-\frac{t-s}{\gamma(x)}} I\otimes I + 2\mu(x) e^{-\frac{t-s}{\tau(x)}}\mathcal{I},
$$
where $\lambda(x),\mu(x),\gamma(x),\tau(x)>0$. Then we have
\begin{align*}
\mathcal{C}_t(x,t,s)&=-\frac{\lambda(x)}{\gamma(x)}e^{-\frac{t-s}{\gamma(x)}} I\otimes I -\frac{2\mu(x)}{\tau(x)} e^{-\frac{t-s}{\tau(x)}}\mathcal{I},\\
\mathcal{C}(x,t,t)&=\lambda(x) I\otimes I + 2\mu(x) \mathcal{I},\\
\mathcal{S}(x,t)&=\mathcal{C}^{-1}(x,t,t)=-\frac{\lambda(x)}{2\mu(x)(3\lambda(x)+2\mu(x))} I\otimes I + \frac{1}{2\mu(x)} \mathcal{I},\\
\mathcal{S}_t(x,t)&=0.
\end{align*}
If we define the strain energy density as suggested in \eqref{def_energy}, then we find
$$
\begin{aligned}
&e_S(x,t)=\frac 1 2 \sigma: \mathcal{S} \sigma\\
=&
\frac{1}{\mu}\left(\left|\sigma_{\mu,\tau}\right|^2-\frac{\lambda}{3\lambda+2\mu}\left|\mbox{tr}(\sigma_{\mu,\tau})\right|^2\right)
+\frac{\frac 3 2}{3\lambda+2\mu}\left|\mbox{tr}(\sigma_{\lambda,\gamma})\right|^2
+\frac{2}{3\lambda+2\mu}\mbox{tr}(\sigma_{\lambda,\gamma})\mbox{tr}(\sigma_{\mu,\tau}),
\end{aligned}
$$
where
$\displaystyle\sigma_{\lambda,\gamma}(x,t):=\int_0^t \lambda(x)e^{-\frac{t-s}{\gamma(x)}}\epsilon_s(x,s)\,ds$,
$\displaystyle\sigma_{\mu,\tau}(x,t):=\int_0^t \mu(x)e^{-\frac{t-s}{\tau(x)}}\epsilon_s(x,s)\,ds$ and so
$\sigma=\mbox{tr}(\sigma_{\lambda,\gamma})I+2\sigma_{\mu,\tau}$. Unfortunately, this choice of the strain energy
density does not satisfy the energy dissipation criterion \eqref{F-cond} with
$$
\begin{aligned}
-F(x,t)=-\sigma: \mathcal{S} \tilde{\sigma}=&
\frac{2}{\tau\mu}\left|\sigma_{\mu,\tau}\right|^2+\frac{3}{\gamma(3\lambda+2\mu)}\left|\mbox{tr}(\sigma_{\lambda,\gamma})\right|^2\\
&+\frac{2(\gamma+\tau)}{\gamma\tau(3\lambda+2\mu)}\mbox{tr}(\sigma_{\lambda,\gamma})\mbox{tr}(\sigma_{\mu,\tau})
-\frac{2\lambda}{\gamma\tau(3\lambda+2\mu)}
\left|\mbox{tr}(\sigma_{\mu,\tau})\right|^2,
\end{aligned}
$$
which can be negative for $\gamma\gg 1$. So we define our strain energy density differently, but consistent
with our new method:
$$
\begin{aligned}
e_S(x,t)=&
\frac{1}{\mu}\left|\sigma_{\mu,\tau}\right|^2
+\frac{1}{2\lambda}\left|\mbox{tr}(\sigma_{\lambda,\gamma})\right|^2,
\end{aligned}
$$
which is identical to that in Section \ref{sec:Background}. Then we get
$$
\sigma:\dot \epsilon-\dot{e}_S=\left(\mbox{tr}(\sigma_{\lambda,\gamma})I+2\sigma_{\mu,\tau}\right):\dot{\epsilon}
-\frac{2}{\mu}\sigma_{\mu,\tau}:\dot\sigma_{\mu,\tau}-\frac{1}{\lambda}\mbox{tr}(\sigma_{\lambda,\gamma})\mbox{tr}(\dot\sigma_{\lambda,\gamma}).
$$
Observing that $\dot\sigma_{\lambda,\gamma}=\lambda\dot\epsilon-\frac{1}{\gamma}\sigma_{\lambda,\gamma}$ and
$\dot\sigma_{\mu,\tau}=\mu\dot\epsilon-\frac{1}{\tau}\sigma_{\mu,\tau}$, we have
$$
\sigma:\dot \epsilon-\dot{e}_S=\frac{2}{\mu\tau}|\sigma_{\mu,\tau}|^2+\frac{1}{\lambda\gamma}|\mbox{tr}(\sigma_{\lambda,\gamma})|^2.
$$
Therefore, if we define $\dot{e}_D:=\frac{2}{\mu\tau}|\sigma_{\mu,\tau}|^2+\frac{1}{\lambda\gamma}|\mbox{tr}(\sigma_{\lambda,\gamma})|^2$,
it possesses the work density decomposition \eqref{decomp_energy} given in Lemma \ref{lemma1}. Furthermore,
$$
\begin{aligned}
&F=-\dot{e}_D\\
=&\frac{-1}{\gamma(x)\lambda(x)}\left|\int_0^t \lambda(x)e^{-\frac{t-s}{\gamma(x)}}\mbox{tr}(\epsilon_s(x,s))\,ds\right|^2
+\frac{-2}{\tau(x)\mu(x)}\left|\int_0^t\mu(x)e^{-\frac{t-s}{\tau(x)}}\epsilon_s(x,s)\,ds\right|^2\le 0.
\end{aligned}
$$
Then we can follow the proof of Theorem \ref{elas_FPS_g} to achieve finite propagation speed with
$\displaystyle c=\sup_{x\in \bar B_{R} (x_0)} \sqrt{\frac{\lambda(x)+2\mu(x)}{\rho(x)}}$,
where we additionally need in the proof the following inequality:
$$
\begin{aligned}
&e_K(x,t)+e_S(x,t)-\frac{1}{c} (\sigma \vec u_t) \cdot \frac{x-x_0}{|x-x_0|}\\
\ge&\frac{\rho}{2} |\vec u_t|^2 +\frac{\lambda\left|\nabla\cdot\vec{u}\right|^2}{2}
+\mu|\epsilon|^2-\frac{1}{c} (\lambda|\nabla\cdot\vec u||\vec u_t|+2\mu|\epsilon||\vec u_t|)\\
\ge& \frac{\lambda}{2}\left(\frac{\rho|\vec u_t|^2}{\lambda+2\mu}+|\nabla\cdot\vec u|^2
-\frac{2 |\nabla\cdot \vec u||\vec u_t|}{c}\right)
+\mu\left(\frac{\rho|\vec u_t|^2}{\lambda+2\mu}+|\epsilon|^2-\frac{2|\epsilon||\vec u_t|}{c}\right)\\
\ge& \frac{\lambda}{2}\left(\frac{|\vec u_t|^2}{c^2}+|\nabla\cdot\vec u|^2
-\frac{2 |\nabla\cdot \vec u||\vec u_t|}{c}\right)
+\mu\left(\frac{|\vec u_t|^2}{c^2}+|\epsilon|^2-\frac{2|\epsilon||\vec u_t|}{c}\right)\\
=& \frac{\lambda(x,t)}{2}\left(\frac{|\vec u_t|}{c}-|\nabla\cdot\vec u|\right)^2
+\mu(x,t)\left(\frac{|\vec u_t|}{c}-|\epsilon|\right)^2\ge 0.
\end{aligned}
$$
This example illustrates that the choice of strain energy density to achieve a finite
propagation speed result may be model dependent.

\subsection{Generalized Linear Solid model (Wiechert Model)}
Consider
$$
\begin{aligned}
&\mathcal{C}(x,t,s)=\mathcal{C}(x,t-s)\\
=&\left(\lambda^0(x)+\sum_{i=1}^n\lambda^i(x)e^{-\frac{t-s}{\gamma^i(x)}}\right) I\otimes I + 2\left(\mu^0(x)+\sum_{i=1}^n \mu^i(x) e^{-\frac{t-s}{\tau^i(x)}}\right)\mathcal{I},
\end{aligned}
$$
where
$\gamma^i(x),\tau^i(x),\lambda^i(x),\mu^i(x)>0$ for all $i=0,1,\ldots,n$. Then
$\mathcal{C}$ can be naturally decomposed into a sum of tensors where each individual tensor yields
finite propagation speed. By the superposition principle, then $\mathcal{C}$ also yields finite propagation speed with
$\displaystyle c=\sup_{x\in \bar B_{R} (x_0)} \sqrt{\frac{\sum_{i=0}^n(\lambda^i(x)+2\mu^i(x))}{\rho(x)}}$.
Comparing to the example in Section \ref{sec:Background}, here we have chosen $n=M=N$, but it is not necessary to do so.
A straightforward restructuring of the elasticity tensor leads to a similar result.

\subsection{Aging isotropic material}\label{sec:aging}
If a medium is being aged during wave propagation, the constitutive law is not of convolution type.
Since the time scale for aging is slow relative to the wave dynamics, we may model it as
$$
\mathcal{C}(x,t,s)=\lambda(x,t) I\otimes I + 2\mu(x,t) \mathcal{I},
$$
where $\lambda(x,t),\mu(x,t)>0$ and $\lambda_t(x,t),\mu_t(x,t)\le 0$.
Define the strain energy density as
$$
e_S(x,t)=\frac{\lambda(x,t)}{2}|\mbox{tr}(\epsilon)|^2 +\mu(x,t) |\epsilon|^2,
$$
motivated again by the purely elastic case. Then we get
$$
\sigma:\dot \epsilon-\dot{e}_S=-\frac 1 2 \big(\lambda_t |\mbox{tr}(\epsilon)|^2+2\mu_t|\epsilon|^2\big).
$$
Therefore, if we define $\dot{e}_D:=-\frac 1 2 \big(\lambda_t |\mbox{tr}(\epsilon)|^2+2\mu_t|\epsilon|^2\big)\ge 0$,
it possesses the work density decomposition \eqref{decomp_energy} given in Lemma \ref{lemma1}, and $F=-\dot{e}_D\le 0$.
Then we can follow the proof of Theorem \ref{elas_FPS_g} to achieve finite propagation speed with
$$
c=\sup_{(x,t)\in \bar B_{R} (x_0)\times [0,T]} \sqrt{\frac{\lambda(x,t)+2\mu(x,t)}{\rho(x)}}.
$$

\subsection{Sum of aging and exponential convolution type models}
Consider
$$
\mathcal{C}(x,t,s)=\left(\lambda^0(x,t)+\sum_{i=1}^n\lambda^i(x)e^{-\frac{t-s}{\gamma^i(x)}}\right) I\otimes I + 2\left(\mu^0(x,t)+\sum_{i=1}^n \mu^i(x) e^{-\frac{t-s}{\tau^i(x)}}\right)\mathcal{I},
$$
where $\lambda^0(x,t),\mu^0(x,t)>0$, $\lambda_t^0(x,t),\mu_t^0(x,t)\le 0$, and $\gamma^i(x),\tau^i(x),\lambda^i(x),\mu^i(x)>0$ for all $i=0,1,\ldots,n$.
Then
$\mathcal{C}$ can be naturally decomposed into a sum of tensors where each individual tensor yields
finite propagation speed. By the superposition principle, then $\mathcal{C}$ also yields finite propagation speed with
$$
\displaystyle c=\sup_{(x,t)\in \bar B_{R} (x_0)\times [0,T]} \sqrt{\frac{\lambda^0(x,t)+2\mu^0(x,t)+\sum_{i=1}^n
(\lambda^i(x)+2\mu^i(x))}{\rho(x)}}.
$$
Comparing to the example in Section \ref{sec:Background}, here $\lambda^0$ and $\mu^0$ are allowed to be time-dependent.

\subsection{Fractional Zener model}
Consider
\begin{equation}\label{frac_zener}
\mathcal{C}(x,t,s)=\mathcal{C}^1(x)+E_\alpha\left[-\left(\frac{t-s}{a}\right)^\alpha\right] \mathcal{M}(x), \quad 0<\alpha<1,\quad
a>0,
\end{equation}
where $\mathcal{C}^1$ and $\mathcal{M}$ are symmetric and positive definite, and
$E_\alpha(z)=\sum_{n=0}^\infty\frac{z^n}{\Gamma(\alpha n+1)}$ is the Mittag-Leffler function.
It is known that $E_\alpha[-{z^\alpha}/{a^\alpha}]$ is completely monotone on $(0,\infty)$, thus it possesses the following Laplace transform  representation
by Berstein's theorem:
$$
\begin{aligned}
E_\alpha\left[-\left(\frac{t-s}{a}\right)^\alpha\right]=&\int_0^\infty e^{-(t-s)\tau} d\beta(\tau)\\
=&\frac{\sin \alpha \pi}{\pi} \int_0^\infty e^{-(t-s)\tau}\frac{(a\tau)^\alpha}{\tau[(a\tau)^{2\alpha}+2(a\tau)^\alpha \cos \alpha\pi+1]}d\tau,
\end{aligned}
$$
where the measure $\beta$ can be explicitly computed in this case. Therefore
$$
\begin{aligned}
\mathcal{C}(x,t,s)&=\mathcal{C}^1(x)+\int_0^\infty e^{-t\tau} e^{s\tau}\mathcal{M}(x) d\beta(\tau)
=\mathcal{C}^1(x)+\int_0^\infty e^{-t(\tau \mathcal{I})} [e^{s\tau}\mathcal{M}(x)] d\beta(\tau)
\end{aligned}
$$
has the form of \eqref{C_sum_exp} with $\mathcal{A}^\tau(x)=\tau \mathcal{I}$ and $\mathcal{C}_0^\tau(x,s)=e^{s\tau}\mathcal{M}(x)$.
For each $\tau\in(0,\infty)$, $\mathcal{A}^\tau$ and $\mathcal{C}_0^\tau$ certainly satisfy the conditions given in Remark \ref{cond_suff_F}.
By Corollary \ref{exponential_kernel}, this model yields finite propagation speed with maximum propagation speed not exceeding
$$
c=\sup_{x\in \bar B_{R} (x_0)}\left[\frac{\lambda_{max}^{\mathcal{C}^1}(x)+\lambda_{max}^{\mathcal{M}}(x)}{\rho(x)}\right]^{\frac 1 2}.
$$
This estimate of $c$ follows from the facts that $e^{-t \mathcal{A}^\tau(x)} \mathcal{C}_0^\tau(x,t)=\mathcal{M}(x)$ is independent of $\tau$ and $\int_0^\infty d\beta(\tau)=E_\alpha[0^-]=1$. The model \eqref{frac_zener} is called the fractional Zener model, because it establishes
a fractional differential relation between the stress and strain:
$$
\left[1+a\frac{d^\alpha}{dt^\alpha}\right]\sigma(x,t)=\left[\mathcal{C}^1(x)+
a\left(\mathcal{M}(x)+\mathcal{C}^1(x)\right)\frac{d^\alpha}{dt^\alpha}\right]\epsilon(x,t).
$$
Here $d^\alpha/dt^\alpha$ denotes the Caputo fractional derivative defined by
$$
\frac{d^\alpha f}{dt^\alpha} (t):=\frac{1}{\Gamma(1-\alpha)}\int_0^t \frac{\dot{f}(s)}{(t-s)^\alpha}ds.
$$
In \cite{M}, one may find more information on fractional calculus including the fractional Zener model.

\section{Conclusion and Discussion}
In this paper we started with work density and introduced a new type of decomposition called work density decomposition.  We apply this decomposition to a very general linear, anisotropic, viscoelastic, integro-differential model with variable coefficients.  Then using energy arguments including the space-time divergence theorem, we establish finite propagation speed results.   These results include a superposition principle that enables a possibly easier way to establish that the hypotheses of our theorem are satisfied.  The advantage here over plane wave arguments
(see e.g.\cite{M}) is that the anisotropic viscoelastic parameters can be variable. In the variable coefficient case we obtain an upper bound for
the wave propagation speed.

 While we primarily select a specific work density decomposition we have noted that this decomposition
 is not unique.  The reason for this is that the decomposition includes strain energy density and this quantity is not uniquely defined in the viscoelastic case.  This means that in the application of the decomposition to establish finite propagation speed results, the
 strain energy density must be chosen appropriately for any specific model. To illustrate this point, we provide some examples of practical models for which a different choice for strain energy density is needed to establish finite propagation speed.

\section*{Acknowledgments}
The first author, Joyce McLaughlin, was partially supported for this work by ONR N00014-13-1-0388 funding.

\end{document}